\documentclass[11pt,reqno]{amsart}
\usepackage[top=3cm,bottom=3cm,left=3cm,right=3cm]{geometry}
\usepackage{amsthm,amsmath,amsfonts,amssymb}
\usepackage{hyperref} 

\newtheorem{Theorem}{Theorem}
\newtheorem{Corollary}[Theorem]{Corollary}
\newtheorem{Lemma}[Theorem]{Lemma}
\newtheorem{Proposition}[Theorem]{Proposition}
\newtheorem{Remark}[Theorem]{Remark}
\newtheorem{Definition}[Theorem]{Definition}

\begin{document}
\title[Robustness for nonlocal eq. under perturbation]{On the robustness of pullback attractors for a nonlocal reaction-diffusion equation under perturbation}

\author[Rub\'en Caballero, Pedro Mar\'in-Rubio and Jos\'e Valero]{Rub\'en Caballero, Pedro Mar\'in-Rubio and Jos\'e Valero}

\address[Rub\'en Caballero and Jos\'e Valero]{Centro de Investigaci\'on Operativa, Universidad Miguel Hern\'andez de Elche,\\
Avda. Universidad s/n, 03202, Elche (Alicante), Spain}
\email[R. Caballero]{ruben.caballero@umh.es}
\email[J. Valero]{jvalero@umh.es}

\address[Pedro Mar\'in-Rubio]{Departamento de Ecuaciones Diferenciales y An\'alisis Num\'erico\\
Universidad de Sevilla\\
C/ Tarfia s/n, 41012, Sevilla, Spain}
\email[P. Mar\'in-Rubio]{pmr@us.es}

\date{}

\keywords{Reaction-diffusion equations, nonlocal equations, attractors, multivalued dynamical systems, robustness of attractors.}

\subjclass[2010]{Primary 35B40, 35B41, 35K55; Secondary 35K57.}

\maketitle

\begin{abstract} A parametric family of reaction-diffusion equations with nonlocal viscosity is considered. Existence of solutions and actually of pullback attractors is known from previous works. In this paper we obtain a robustness result of the attractors toward the corresponding minimal pullback attractor of the limiting problem. This result extends the ones obtained in \cite{ND16}. Actually here all terms (reactions, external forces and nonlocal viscosity functions) may vary with the parameter. The upper semicontinuous convergence of attractors is obtained under rather general assumptions and in a fully non-autonomous context using the framework of tempered universes. 
\end{abstract}

\begin{center}Dedicated to Professor Peter Kloeden on occasion of his Seventieth Birthday\end{center}

\section{Introduction} 

The theory of parabolic PDEs (heat transfer and general concentration diffusion among others) has experimented a great increase of nonlocal models in the last few decades (all throughout this paper by nonlocal we only mean in space). Actually there are many examples in several sciences that require different types of nonlocal modeling, e.g. in Physics one may refer to phase-field analysis (introducing the phase variable) for phase-transition of material (single or combined), or in Biology where the number of equations and systems for tumor growth analysis has increased exponentially in the last years, some of them taking into account nonlocal effects, convolution operators as fractional laplacian and others, or bacteria group movements as macro organisms (e.g. cf. \cite{25,40,brr}). Namely these models include terms that, instead of a local measure in a certain point, are in fact nonlocal as an average (isotropic or anisotropic) of the unknown in a neighborhood of each point. This is sometimes due to the influence that the neighborhood has on each point, and some measurements come from taking experimental data.

The following nonlocal diffusion equation (fulfilled with suitable initial and boundary conditions) has been considered by many authors:
$$\frac{\partial u}{\partial t}-a(l(u))\Delta u=f\quad\textrm{in $\Omega$,}$$
where $a$ is a certain positive function, $l\in (L^2(\Omega))'$ is a functional acting on $u$ over the whole domain $\Omega,$ i.e. $l(u(t)):=\int_\Omega g(x)u(x,t)dx$ for a certain $g\in L^2(\Omega)$. It is clear that $a$ is a viscosity which takes into account the weighted average of $u.$ Depending on the increasing or non-increasing character of $a$ one may simulate aggregation effects or the opposite, leaving crowded zones behavior. Several papers by Chipot and his collaborators (e.g. cf. \cite{chipot-lovat,chipot-molinet}) use the former in epidemic theory or to study heat propagation (also in divergence form for inhomogeneous domains \cite{chipot-siegwart}; actually we can develop the same analysis with an operator in divergence form as in the cited reference, but for simplicity in the presentation we keep the laplacian). The mathematical analysis is not restricted to existence and uniqueness of solutions, but also refers to stationary points, convergence of evolutive to stationary solutions, ordered intervals, or general stability issues among others. Some variations are also known to have Lyapunov functionals (e.g. cf. \cite{chipot-valente-caffarelli,chipot-zheng,chipot-savitska}), but this is not the general case, implying a more complex analysis. 

The case of $f$ depending on the unknown $u$ has also been recently treated in some forms. For an interesting nonlocal reaction part with small values of a parameter we may refer to \cite{4}. The case of nonlocal viscosity and $f(u)$ 
$$\frac{\partial u}{\partial t}-a(l(u))\Delta u=f(u)+h\quad\textrm{in $\Omega$}$$
has been developed in some recent papers (see \cite{NA15} for a simpler approach with sublinear term and \cite{ND16,DCDSB18,PRSE18,JMAA18} for proper general nonlocal reaction-diffusion models in several situations). Existence, sometimes uniqueness, regularity and attractors issues have been addressed in the above references. No need to say that the analysis of stationary points and decay is extremely difficult since again the obtention of a Lyapunov functional is not obvious at all neither the study of stationary points. This drawback makes the study of existence and properties of attractors even more interesting as a natural extension. 

On other hand, a main concern in the study of a model is its continuous behavior with respect to some of its elements. Suppose for instance that real data are not available in a straightforward way but collected successively and that instead of a single problem, we have a family of problems with analogous structure but slightly different (readjusted) terms that we notate with a parameterized index. In this sense, both continuity in finite-time intervals and robustness properties -when hold- indicate how some structures vary (at least) continuously w.r.t. parameters (e.g. cf. \cite{marin-rubio} for similar results in a setting with delay). Namely we consider the following perturbed family of reaction-diffusion equations
$$(P_\eta)\left\{\begin{array}{ll}
\displaystyle\frac{\partial u}{\partial t}-a_\eta(l_\eta(u))\Delta u=f_\eta(u)+h_\eta(t)&\textrm{in $\Omega\times (\tau,\infty)$},\\
u=0&\textrm{on $\partial\Omega\times (\tau,\infty)$},\\
u(x,\tau)=u_\tau(x)&\textrm{in $\Omega$},
\end{array}\right.$$
where $\Omega\subset\mathbb R^N$ is a bounded domain, $\eta\in(0,1]$ is an indexing parameter of the family of perturbed problems, and the functions $a_\eta,$ $f_\eta$ and $h_\eta$ satisfies the standard assumptions on dissipative parabolic problems of reaction-diffusion type, that will be specified below.

Our goal in this paper is to analyze the behavior of attractors for the problems $(P_\eta)$ as $\eta\to0.$ Some preliminar robustness results for a family of problems as above can be found in \cite{ND16} (see also \cite{DCDSB18} for an improved regularity result). Nevertheless in those references the considered perturbations are strongly uniform and the limiting problem is autonomous. Both restrictions are actually quite unsatisfactory and unreal in practice and have been removed in this paper.

The structure of the paper is as follows. In Section \ref{s2}, we recall some known results about solutions for this family of nonlocal reaction-diffusion problems that allow us to define suitable dynamical systems (in particular the standard assumptions appear in (A1)). Since it is also well-known, we combine here some abstract results ensuring the existence of pullback attractors for multi-valued processes (which is the case here), to apply immediately to problems $(P_\eta)$ under the additional assumption (A2). In Section \ref{s3}, the robustness property is settled down step by step with some successive results introduced by suitable assumptions completing the previous ones. Theorem \ref{t19} is our main result and involves a fully non-autonomous development since the limiting problem (denoted $(P_0)$) is in general non-autonomous. The \emph{ad hoc} condition (A5) used in Theorem \ref{t19} is analyzed at the end of the paper. Some remarks and sufficient conditions to guarantee (A5) are provided, relating the forces $h_\eta$ and approppriate tempered parameters.
 
\section{Dynamical systems and attractors}\label{s2}

The notation $(\cdot,\cdot)$ will be used for the scalar product between elements in $L^2(\Omega )$ and also the duality between $L^p(\Omega )$ and $L^q(\Omega )$ ($1/p+1/q=1$). The open and closed balls in $L^2(\Omega )$ of center $a$ and radius $r$ are $B_{L^2(\Omega )}(a,r)$ and $\bar B_{L^2(\Omega )}(a,r)$ respectively. The dual of $H^1_0(\Omega)$ is  $H^{-1}(\Omega)$ and the duality product between them will be denoted by $\langle\cdot,\cdot\rangle.$ Here we identify $L^2(\Omega)$ with its dual, being the chain of embeddings $H^1_0(\Omega )\subset L^2(\Omega)\subset H^{-1}(\Omega )$ dense and compact. Let us also denote by $\lambda_1>0$ the first eigenvalue of $-\Delta$ with homogeneous Dirichlet boundary condition.

The existence of weak solutions for the problems $(P_\eta)$ requires the elements in the corresponding PDE to be in suitable spaces and with appropriate conditions. We collect all of them and present in the next assumption.
\begin{enumerate}
\item[(A1)] There exist positive constants $\alpha_1,$ $\alpha_2$ and $m$ and $\kappa_1\ge0,$ $\kappa_2\ge0,$ $p\ge2$ such that $\{a_\eta\}_{(0,1]}\subset C(\mathbb R;[m,\infty));$ $\{f_\eta\}_{(0,1]}\subset C(\mathbb R)$ fulfill
\begin{align*}
|f_\eta(s)|&\le\kappa_1+\alpha_1|s|^{p-1}\quad\forall s\in\mathbb R,\\
f_\eta(s)s&\le 
\kappa_2-\alpha_2|s|^p\quad\forall s\in\mathbb R,
\end{align*}
$\{h_\eta\}_{(0,1]}\subset L^2_{loc}(\mathbb R;H^{-1}(\Omega ))$ and $\{l_\eta\}_{(0,1]}\subset (L^2(\Omega ))'$ (since we make the identification of $L^2(\Omega)$ with its dual, for the rest of the paper we just put $\{l_\eta\}_{\eta\in(0,1]}\subset L^2(\Omega )$ and no distinction will be made between the classical notation in the precedent literature $l_\eta(v)$ or the form $(l_\eta,v)$ for each $v\in L^2(\Omega )$ and $\eta\in(0,1]$).
\end{enumerate}

\begin{Remark}
If $p=2,$ the dissipative condition can be weakened in the following way,
\begin{equation}\label{940} 
f_\eta(s)s\le\kappa_2+(m\lambda_1-\alpha_2)s^2.
\end{equation}
However, since this would lead to different expressions and conditions on the radii of the absorbing families, in order to simplify the exposition we prefer to keep (A1) as above. On the other hand, it is possible to use assumption (A1) with $1<p<2,$ but in such a case both conditions (the growth and the dissipative ones) are stronger than in the case where $p=2$ if we assume (\ref{940}).
\end{Remark}

\begin{Definition}
A weak solution to $(P_\eta)$ is an element $u\in L^\infty (\tau, T;L^2(\Omega ))\cap L^2(\tau,T;H^1_0(\Omega ))\cap L^p(\tau,T;L^p(\Omega ))$ for any $T>\tau$ with $u(\tau)=u_\tau$ and that verifies in the scalar distribution sense
$$\frac{d}{dt}(u,v)+a_\eta(l_\eta(u))(\nabla u,\nabla v)=(f_\eta(u),v)+\langle h_\eta,v\rangle\quad\forall v\in H^1_0(\Omega )\cap L^p(\Omega ).$$ 
\end{Definition}
\begin{Remark}\label{r1}
Since a weak solution $u$ satisfies that $u'\in L^2(\tau, T;H^{-1}(\Omega ))+L^q(\tau, T;L^q(\Omega ))$ for any $T>\tau,$ then $u\in C([\tau,\infty );L^2(\Omega ))$ and the following energy equality holds (e.g. cf. \cite[Lemma 3.2, p. 71]{Temam})
\begin{align*}
&|u(t)|^2+2\int_s^ta_\eta(l_\eta(u(r)))\|u(r)\|^2dr\\
=&|u(s)|^2+2\int_s^t((f_\eta(r),u(r))+\langle h_\eta(r),u(r)\rangle)dr\quad\forall \tau\le s\le t.
\end{align*}
\end{Remark}

The existence of global weak solutions for reaction-diffusion equations is well-known (e.g. cf. \cite{BV,Temam,CV}), and also when including nonlocal viscosity terms under the above assumptions (e.g. cf. \cite{ND16,PRSE18}; for a nonlocal $p$-Laplacian RD problem see \cite{JMAA18}), using compactness arguments.
\begin{Theorem}
Assume that (A1) holds. Then for any $u_\tau\in L^2(\Omega )$ there exists at least one weak solution to $(P_\eta).$ The set of weak solutions to $(P_\eta)$ with initial datum $u_\tau$ at time $\tau$ will be denoted by $\Phi_\eta (\tau,u_\tau).$
\end{Theorem}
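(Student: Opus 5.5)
The plan is a Faedo--Galerkin approximation followed by a compactness argument, with the nonlocal viscosity handled through strong $L^2$ convergence of the approximants. Fix $\eta$ and $T>\tau$. Let $\{w_j\}$ be the eigenfunctions of $-\Delta$ with Dirichlet condition, which lie in $H^1_0(\Omega)\cap L^p(\Omega)$ by elliptic regularity. Let $P_n$ be the orthogonal projection onto $V_n=\mathrm{span}\{w_1,\dots,w_n\}$. We look for $u_n(t)=\sum_{j\le n}\gamma_{nj}(t)w_j$ solving
\[
(u_n',w_j)+a_\eta(l_\eta(u_n))(\nabla u_n,\nabla w_j)=(f_\eta(u_n),w_j)+\langle h_\eta,w_j\rangle,\qquad u_n(\tau)=P_nu_\tau .
\]
This is an ODE system with coefficients continuous in $\gamma$ and $L^2$ in $t$. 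Since $a_\eta$ and $f_\eta$ are only continuous, Peano/Carathéodory gives a local solution, not necessarily unique; any choice will do.

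The a priori estimates come from testing with $u_n$ and using $a_\eta\ge m$ together with the dissipativity in (A1). This gives
\[
\frac{d}{dt}|u_n|^2+2m\|u_n\|^2+2\alpha_2\|u_n\|_{L^p}^p\le 2\kappa_2|\Omega|+2\|h_\eta\|_{H^{-1}}\|u_n\|.
\]
After Young's inequality we get uniform bounds for $u_n$ in $L^\infty(\tau,T;L^2)\cap L^2(\tau,T;H^1_0)\cap L^p(\tau,T;L^p)$, and in particular global existence of $u_n$. From the growth condition, $f_\eta(u_n)$ is bounded in $L^q(\tau,T;L^q)$. The term $a_\eta(l_\eta(u_n))$ is bounded uniformly because $|l_\eta(u_n)|\le|l_\eta||u_n|$ is bounded and $a_\eta$ is continuous, hence bounded on compact sets. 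So $a_\eta(l_\eta(u_n))\Delta u_n$ is bounded in $L^2(\tau,T;H^{-1})$. Using the projection onto $V_n$ (the eigenbasis keeps $P_n$ uniformly bounded on $H^1_0$; for the $L^q$ part we work in $H^{-s}$ with $s$ large enough that $H^s_0\subset L^p$), $u_n'$ is bounded in $L^{q}(\tau,T;H^{-s})$.

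We then pass to the limit. By Aubin--Lions we extract a subsequence with $u_n\to u$ weakly-star in the energy spaces and strongly in $L^2(\tau,T;L^2)$, hence a.e. in $\Omega\times(\tau,T)$. A diagonal argument in $T$ makes these convergences hold on every $[\tau,T]$.

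Identifying the limits in the nonlinear terms is the delicate step.
\begin{itemize}
\item For the reaction term, continuity of $f_\eta$ gives $f_\eta(u_n)\to f_\eta(u)$ a.e. Together with the $L^q$ bound, this yields weak convergence $f_\eta(u_n)\rightharpoonup f_\eta(u)$ in $L^q$.
\item For the nonlocal viscosity, strong $L^2(\tau,T;L^2)$ convergence gives $l_\eta(u_n(t))\to l_\eta(u(t))$ for a.e. $t$ (along a subsequence). Then $a_\eta(l_\eta(u_n))\to a_\eta(l_\eta(u))$ a.e. in $t$ with a uniform bound, so this convergence is strong in every $L^r(\tau,T)$. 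Multiplied by $\nabla u_n\rightharpoonup\nabla u$ weakly in $L^2(\tau,T;L^2)$, the product converges weakly.
\end{itemize}
One then passes to the limit in the Galerkin identity tested against $\phi(t)w_j$ and uses density of $\bigcup V_n$ in $H^1_0\cap L^p$.

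Finally we check the initial condition. The bounds on $u_n'$ give $u_n(t)\to u(t)$ weakly in $H^{-s}$ for each $t$, and $u_n(\tau)=P_nu_\tau\to u_\tau$, so $u(\tau)=u_\tau$. The continuity $u\in C([\tau,\infty);L^2)$ then follows from Remark \ref{r1}.
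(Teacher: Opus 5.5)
Your Faedo--Galerkin and compactness argument is correct, and it is essentially the route the paper has in mind. The paper gives no separate proof of this theorem: it defers to the standard compactness arguments of the cited works. Its proof of Theorem \ref{t14} uses the same ingredients you do, namely the energy estimate from (A1), Aubin--Lions, a.e.\ convergence to identify the reaction term, and convergence of $l_\eta(u_n(t))$ for a.e.\ $t$ combined with dominated convergence for the nonlocal coefficient.
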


Since no suitable Lipschitz (local or global) or monotonicity assumptions are imposed on the functions $a_\eta$ and $f_\eta$, we cannot guarantee a uniqueness result (for that e.g. cf. \cite[Th.2.1]{PRSE18}). This is interesting from the modeling point of view since continuous but non-differentiable viscosities may describe in a better way some diffusion processes, for instance in biological situations as bacteria accumulation or tumor growth (e.g. cf. \cite{cl1,chipot-lovat}). Nevertheless a multi-valued framework can be used to establish a suitable dynamical system.
\begin{Definition}
Given a metric space $(X,d),$ a multi-valued process $U$ on $X$ is a family of multi-valued maps $\mathcal U:\mathbb R^2_d\times X\to P(X)$, where $\mathbb R^2_d=\{(t,\tau)\in\mathbb R^2: t\ge\tau\}$ and $P(X)$ denotes the class of all nonempty subsets of $X,$ such that $\mathcal U(\tau,\tau,\cdot)=$Id$_X$ for any $\tau$ and $\mathcal U(t,r,x)\subset\mathcal U(t,s,\mathcal U(s,r,x))$ for any triplet $r\le s\le t$ and $x\in X.$ [When the inclusion relation becomes an equality, the process is said to be strict.]

A multi-valued process $U$ on $X$ is said upper semicontinuous if for all $(t,\tau)\in\mathbb R^2_d$ and any $x\in X$ and neighborhood $N$ of $U(t,\tau,x)$ in $X$ there exists a neighborhood $M$ of $x$ such that $U(t,\tau,y)\subset N$ for any $y\in M.$
\end{Definition}
The solution operator to $(P_\eta)$ allows us to define suitable multi-valued maps $U_\eta:\mathbb R^2_d\times L^2(\Omega)\to P(L^2(\Omega))$
$$(t,\tau,u_\tau)\mapsto U_\eta (t,\tau,u_\tau):=\{u(t):u\in\Phi_\eta(\tau,u_\tau)\}.$$
Moreover, the analogous compactness arguments on the solutions (and their continuity) give for the multi-valued maps the following result, whose proof is analogous to \cite[Lemma 1 and Proposition 1]{ND16}, so we omit it.
\begin{Proposition}\label{p5}
Assume that (A1) holds. Then $U_\eta$ is a strict multi-valued process on $L^2(\Omega )$ with closed values and upper semicontinuous for any $\eta\in(0,1]$.
\end{Proposition}

When a better description of stationary points and their stability or other dynamical properties is not available, it is still interesting to describe the existence of attractors, as general objects attracting the dynamics of solutions. The non-autonomous framework allows several interpretations, all of them useful, as uniform attractors, trajectory attractors, skew-product flows, etcetera. 

In this paper we focus on pullback (multi-valued) attractors, which describe the time-sections that attract solutions starting pullback in time. This \emph{pullback attracting property} roughly means that the studied phenomena started developing very long time ago. An interesting feature of this setting is that the class of attracted objects can increase those of fixed bounded sets, to larger classes called \emph{universes}, following the school of random dynamical systems. Another distinguished property is that more general assumptions can be supposed on the forces of the models in order to ensure the existence of pullback attractors and that minimal pullback attractors are contained in kernel sections of uniform attractors when the former exist. We briefly summarize the main concepts and ingredients, which of course are similar to those of the autonomous setting.

\begin{Definition}
A universe $\mathcal D$ in $X$ is a nonempty class of families parameterized in time $\widehat D=\{D(t)\}_{t\in\mathbb R}\subset P(X).$ A universe is said inclusion-closed if given $\widehat D=\{D(t)\}_{t\in\mathbb R}\in\mathcal D$ and $\widehat D'=\{D'(t)\}_{t\in\mathbb R}$ with $D'(t)\subset D(t)$ for any $t,$ then $\widehat D'\in\mathcal D.$

A family $\widehat D_0$ (not necessarily in $\mathcal D$) is pullback $\mathcal D$-absorbing for the process $U$ if for any $t\in\mathbb R$ and $\widehat D\in\mathcal D$ there exists $\tau(\widehat D,t)\le t$ such that $U(t,\tau,D(\tau))\subset D_0(t)$ for any $\tau\le\tau(\widehat D,t).$ 

A multi-valued process $U$ on $X$ is $\mathcal D$-asymptotically compact if for any $\widehat D\in\mathcal D,$ $t\in\mathbb R,$ and arbitrary sequences $\{\tau_n\}\subset (-\infty,t]$ with $\tau_n\to-\infty,$ $\{x_n\}\subset X$ with $x_n\in D(\tau_n)$ and $\{y_n\}$ with $y_n\in U(t,\tau_n,x_n),$ it holds that $\{y_n\}$ is relatively compact in $X.$
\end{Definition}

It is immediate that if a family $\widehat D_0\subset P(X)$ (not necessarily in $\mathcal D$) is pullback $\mathcal D$-absorbing for $U$ and also $U$ is $\widehat D_0$-asymptotically compact (with the analogous definition), then $U$ is $\mathcal D$-asymptotically compact. 

Actually these two main ingredients allow us to obtain the pullback attractor.
\begin{Definition}
A pullback $\mathcal D$-attractor for the multi-valued process $U$ on $X$ is any family $\mathcal{A_D}=\{\mathcal{A_D}(t)\}_{t\in\mathbb R}\subset P(X)$ such that (i) $\mathcal{A_D}(t)$ is a nonempty compact subset of $X$ for any $t\in\mathbb R;$ (ii) $\mathcal{A_D}$ pullback attracts any element of $\mathcal{D},$ that is, $\lim_{\tau\to-\infty}dist_X(U(t,\tau,D(\tau)),\mathcal{A_D}(t))=0$ for any $\widehat D\in\mathcal D$ and $t\in\mathbb R;$ (iii) $\mathcal{A_D}$ is negatively invariant, i.e. $\mathcal{A_D}(t)\subset U(t,\tau,\mathcal{A_D}(\tau))$ for any $\tau\le t.$
\end{Definition}
In general pullback attractors are not unique (cf. \cite{mrr09}). In order to gain uniqueness, we need to impose minimality: a pullback $\mathcal D$-attractor $\mathcal{A_D}$ is minimal if for any other family of closed sets $\widehat C=\{C(t)\}_{t\in\mathbb R}$ that pullback $\mathcal D$-attracts, it holds that $\mathcal{A_D}(t)\subset C(t)$ for any $t\in\mathbb R.$ As usual, the key objects in the sense of minimality are omega-limit sets (when they exist). Namely,
$$\Lambda(\widehat D,t)=\bigcap_{s\le t}\overline{\bigcup_{\tau\le s}U(t,\tau,D(\tau))}^X$$
is the omega-limit set of $\widehat D$ by $U$ at time $t.$ Actually, we can summarize all the above in the following result (cf. \cite[Theorem 2]{ND16}), extending the autonomous and non-autonomous multi-valued theories \cite{mv,clmv} to the framework of universes.
\begin{Theorem}\label{ea}
Consider an upper semicontinuous multi-valued process $U$ on a metric space $X$ with closed values, a universe $\mathcal D,$ a pullback $\mathcal D$-absorbing family $\widehat D_0=\{D_0(t)\}_{t\in\mathbb R}\subset P(X)$ and assume that $U$ is pullback $\mathcal D$-asymptotically compact. Then the family $\mathcal{A_D}=\{\mathcal{A_D}(t)\}_{t\in\mathbb R}$ given by
$$\mathcal{A_D}(t)=\overline{\bigcup_{\widehat D\in\mathcal D}\Lambda (\widehat D,t)}^X\quad\forall t\in\mathbb R$$
is the minimal pullback $\mathcal D$-attractor and $\mathcal{A_D}(t)\subset\overline{D_0(t)}^X$ for any $t\in\mathbb R.$ Moreover, if $\mathcal{A_D}\in\mathcal D$ and $U$ is a strict process, then $\mathcal{A_D}$ is (strictly) invariant under $U,$ i.e. $\mathcal{A_D}(t)=U(t,\tau,\mathcal{A_D}(\tau))$ for any $t\ge\tau.$
\end{Theorem}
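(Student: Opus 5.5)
The plan is to construct the omega-limit sets concretely and check in turn: nonemptiness and compactness, attraction, minimality, negative invariance, and finally strict invariance when $\mathcal{A_D}\in\mathcal D$. First, for each $\widehat D\in\mathcal D$ and $t\in\mathbb R$, I would show the sequential characterization: $y\in\Lambda(\widehat D,t)$ iff there are $\tau_n\to-\infty$, $x_n\in D(\tau_n)$, $y_n\in U(t,\tau_n,x_n)$ with $y_n\to y$. Asymptotic compactness then gives that $\Lambda(\widehat D,t)$ is nonempty and compact. It also attracts $U(t,\tau,D(\tau))$ as $\tau\to-\infty$, by the usual contradiction argument: a sequence staying at distance $\ge\varepsilon$ would have a convergent subsequence whose limit lies in $\Lambda(\widehat D,t)$.

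Absorption gives $\Lambda(\widehat D,t)\subset\overline{D_0(t)}$. Compactness of the closure of the union over all $\widehat D\in\mathcal D$ is the delicate point, since $\widehat D_0$ need not belong to $\mathcal D$. I would argue as follows. Take $y_k\in\Lambda(\widehat D_k,t)$. Pick $\tau_k\le -k$ with $\tau_k\le\tau(\widehat D_k,t)$ and points $z_k\in U(t,\tau_k,x_k)$, $x_k\in D_k(\tau_k)$, within $1/k$ of $y_k$.

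The main obstacle is that asymptotic compactness is stated for a single $\widehat D$, not a varying family. I would handle it through the family $\widehat D_0$. Since $\Lambda(\widehat D,t)\subset\overline{\bigcup_{\tau\le s}U(t,\tau,D(\tau))}$ and the process is strict, we can write $y\in U(t,s,\cdot)$ applied to $\overline{D_0(s)}$-limits for each $s$. This places every $\Lambda(\widehat D,t)$ inside $\Lambda(\widehat D_0,t)$, computed via points $w_n\in D_0(s_n)$ with $s_n\to-\infty$. Here I use the remark that $\widehat D_0$-asymptotic compactness holds, which is how the hypothesis is meant. I will then show that $\Lambda(\widehat D_0,t)$ is compact. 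Consequently $\mathcal{A_D}(t)$ is a closed subset of a compact set, and it attracts each $\widehat D$ because it contains $\Lambda(\widehat D,t)$.

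Minimality is direct. If a closed family $\widehat C$ attracts $\widehat D$, then any limit $y$ of $y_n\in U(t,\tau_n,x_n)$ satisfies $\mathrm{dist}(y_n,C(t))\to0$, so $y\in C(t)$. Hence $\Lambda(\widehat D,t)\subset C(t)$, and taking the union and closure gives $\mathcal{A_D}(t)\subset C(t)$.

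For negative invariance, take $y\in\Lambda(\widehat D,t)$ as the limit of $y_n\in U(t,\tau_n,x_n)$ and fix $\tau\le t$. Strictness gives $z_n\in U(\tau,\tau_n,x_n)$ with $y_n\in U(t,\tau,z_n)$. By asymptotic compactness, $z_n\to z\in\Lambda(\widehat D,\tau)$. Upper semicontinuity with closed values then yields $y\in U(t,\tau,z)$: if not, the point $y$ could be separated from the closed set $U(t,\tau,z)$ by a neighborhood, contradicting $y_n\to y$. Thus $\Lambda(\widehat D,t)\subset U(t,\tau,\Lambda(\widehat D,\tau))$. To pass to closures I use that $U(t,\tau,\cdot)$ of a compact set is compact, which follows from upper semicontinuity together with compact values, themselves obtained from asymptotic compactness.

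For strict invariance when $\mathcal{A_D}\in\mathcal D$, the inclusion $U(t,\tau,\mathcal{A_D}(\tau))\subset\mathcal{A_D}(t)$ follows from $U(t,\tau,\mathcal{A_D}(\tau))\subset U(t,s,\mathcal{A_D}(s))$ for $s\le\tau$, which uses negative invariance and strictness. Hence this set lies in $\Lambda(\mathcal{A_D},t)\subset\mathcal{A_D}(t)$.
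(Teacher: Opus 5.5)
Your route is the standard one; the paper gives no proof of its own, only a pointer to \cite[Theorem 2]{ND16}. The attractor is obtained through $\Lambda(\widehat D_0,t)$, the inclusions $\Lambda(\widehat D,t)\subset\Lambda(\widehat D_0,t)$, then attraction, minimality, negative invariance via upper semicontinuity, and invariance when $\mathcal{A_D}\in\mathcal D$.

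\textbf{The gap.} It is the step where you declare $U$ to be pullback $\widehat D_0$-asymptotically compact. That is not among the hypotheses. The remark you appeal to states the converse implication: $\widehat D_0$-asymptotic compactness plus absorption gives $\mathcal D$-asymptotic compactness. No cleverer argument can supply it, because the statement as literally written is false. Take $X=\mathbb R$, $U(t,\tau,x)=\{x\}$ and $\mathcal D=\mathcal D_F$ (fixed bounded sets). Then:
\begin{itemize}
\item $U$ is strict, closed-valued, upper semicontinuous and $\mathcal D_F$-asymptotically compact;
\item the only absorbing family is $D_0(t)=\mathbb R$;
\item $\overline{\bigcup_B\Lambda(B,t)}=\mathbb R$ is not compact.
\end{itemize}
So compactness of $\mathcal{A_D}(t)$ genuinely needs an extra assumption: either $\widehat D_0$-asymptotic compactness or $\widehat D_0\in\mathcal D$. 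You should state it explicitly as a hypothesis; it is what the paper's sentence about ``these two main ingredients'' has in mind. It costs nothing in the paper. By Corollary \ref{c12}, $\widehat D_{0,\eta}\in\mathcal D^{L^2}_{\mu_\eta}$, so the $\mathcal D^{L^2}_{\mu_\eta}$-asymptotic compactness used in Theorem \ref{t140} yields $\widehat D_{0,\eta}$-asymptotic compactness, also when the universe is $\mathcal D^{L^2}_F$.

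\textbf{Three small repairs.} With that hypothesis in place, your proof goes through after these fixes.
\begin{enumerate}
\item Wherever you say strictness gives the intermediate point (in $\Lambda(\widehat D,t)\subset\Lambda(\widehat D_0,t)$ and in negative invariance), you only need $U(t,r,x)\subset U(t,s,U(s,r,x))$. That is part of the definition of a multi-valued process. This matters, since strictness is assumed only for the final assertion.
\item Prove $\Lambda(\widehat D,t)\subset\Lambda(\widehat D_0,t)$ by a diagonal choice, not through limits in $\overline{D_0(s)}$ at a fixed $s$, which would need compactness you do not have. Given $y=\lim y_n$ with $y_n\in U(t,\tau_n,x_n)$, pick $s_k\le\min\{t,-k\}$ and $n_k$ with $\tau_{n_k}\le\tau(\widehat D,s_k)$ and $d(y_{n_k},y)<1/k$. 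Then $y_{n_k}\in U(t,s_k,w_k)$ for some $w_k\in U(s_k,\tau_{n_k},x_{n_k})\subset D_0(s_k)$, with $s_k\to-\infty$.
\item Asymptotic compactness says nothing about $U(t,\tau,x)$ for fixed $\tau$, so compact values are not available. They are not needed either. Suppose $y=\lim y_k$ with $y_k\in U(t,\tau,z_k)$ and $z_k\in\mathcal{A_D}(\tau)$. Compactness of $\mathcal{A_D}(\tau)$ gives $z_k\to z$ along a subsequence. Your own separation argument (closed values plus upper semicontinuity) then gives $y\in U(t,\tau,z)$, which is exactly the passage to the closure in negative invariance.
\end{enumerate}
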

\begin{Remark}\label{r9} (i) Actually for each $\widehat D\in\mathcal D,$ if $U$ is just pullback $\widehat D$-asymptotically compact, $\Lambda(\widehat D,t)$ is the minimal family of closed time-sections pullback attracting $\widehat D.$ Therefore these omega-limit families are the smallest pieces inside the attractor. 

\noindent (ii) If the pullback $\mathcal D$-absorbing family $\widehat D_0$ belongs to $\mathcal D,$ has closed sections and $\mathcal D$ is inclusion-closed, then $\mathcal{A_D}\in\mathcal D.$
\end{Remark}

After this brief recall, we get back to the family of problems $(P_\eta).$ The energy equality (cf. Remark \ref{r1}) and the Gronwall lemma give the following estimate.
\begin{Proposition}\label{p10}
Assume that (A1) holds. Then any weak solution $u$ to $(P_\eta)$ satisfies for any $\mu\in (0,2m\lambda_1)$
$$|u(t)|^2\le e^{-\mu (t-\tau)}|u_\tau|^2+\frac{2\kappa|\Omega |}{\mu}+\frac{1}{2(m-\mu(2\lambda_1)^{-1})}e^{-\mu t}\int_\tau^te^{\mu s}\|h_\eta(s)\|^2_*ds\quad\forall t\ge\tau.$$
\end{Proposition}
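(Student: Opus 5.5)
Starting from the energy equality of Remark \ref{r1}, I will derive a differential inequality for $|u(t)|^2$ and then integrate it with the weight $e^{\mu t}$. Here $\kappa$ should be read as $\kappa_2$ (or as some constant built from it), and $\|\cdot\|_*$ denotes the norm of $H^{-1}(\Omega)$. Since $u\in C([\tau,\infty);L^2(\Omega))$ and the energy equality holds for all $\tau\le s\le t$, the function $|u(\cdot)|^2$ is absolutely continuous, and for a.e. $t$
$$\frac{d}{dt}|u|^2+2a_\eta(l_\eta(u))\|u\|^2=2(f_\eta(u),u)+2\langle h_\eta,u\rangle .$$

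I will estimate each term using (A1). First, $a_\eta\ge m$ gives the lower bound $2m\|u\|^2$ on the left-hand side. Next, integrating the dissipativity condition over $\Omega$ gives $2(f_\eta(u),u)\le 2\kappa_2|\Omega|-2\alpha_2|u|_{L^p}^p\le 2\kappa_2|\Omega|$, and the $L^p$ term can simply be discarded. For the forcing term I will use Young's inequality with a free parameter $\varepsilon>0$:
$$2\langle h_\eta,u\rangle\le 2\|h_\eta\|_*\|u\|\le \varepsilon\|u\|^2+\varepsilon^{-1}\|h_\eta\|_*^2 .$$
I will then split $2m\|u\|^2=(2m-\varepsilon)\|u\|^2+\varepsilon\|u\|^2$ and choose $\varepsilon$ so that the remaining part $(2m-\varepsilon)\|u\|^2\ge(2m-\varepsilon)\lambda_1|u|^2$, obtained from Poincaré's inequality, equals $\mu|u|^2$. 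This means taking $\varepsilon=2m-\mu/\lambda_1=2(m-\mu(2\lambda_1)^{-1})$, which is positive exactly when $\mu<2m\lambda_1$. With this choice
$$\frac{d}{dt}|u|^2+\mu|u|^2\le 2\kappa_2|\Omega|+\frac{1}{2(m-\mu(2\lambda_1)^{-1})}\|h_\eta(t)\|_*^2 .$$

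To finish, I will multiply by $e^{\mu t}$ and integrate from $\tau$ to $t$. The constant term contributes $2\kappa_2|\Omega|\mu^{-1}(1-e^{-\mu(t-\tau)})\le 2\kappa_2|\Omega|/\mu$. Multiplying through by $e^{-\mu t}$ then gives the stated bound, with the forcing contribution $\frac{1}{2(m-\mu(2\lambda_1)^{-1})}e^{-\mu t}\int_\tau^t e^{\mu s}\|h_\eta(s)\|_*^2ds$.

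The only real obstacle is that $a_\eta(l_\eta(u(r)))$ is not constant, so the estimate cannot be done by explicit integration. The pointwise lower bound $a_\eta\ge m$ removes this difficulty. The Gronwall step also needs a little care, because it is applied to an absolutely continuous function whose inequality holds only almost everywhere. This is justified by the energy equality holding on every subinterval $[s,t]$; alternatively, one can apply the integral form of the Gronwall inequality directly to the weighted quantity $e^{\mu t}|u(t)|^2$.
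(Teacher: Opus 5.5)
Your proof is correct and follows essentially the same route as the paper. Both arguments use the energy equality with $a_\eta\ge m$, the dissipativity bound, and Young's inequality with weight $2\delta$ where $\mu=2(m-\delta)\lambda_1$ (your $\varepsilon$), then Poincar\'e and Gronwall. The only cosmetic difference is that the paper keeps the term $2\alpha_2\|u\|_p^p$ on the left, since that form (\ref{3}) is reused for the uniform estimates in Theorem \ref{t14}, whereas you discard it.
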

\begin{proof}
Any weak solution $u$ to $(P_\eta)$ satisfies the energy equality and by (A1)
$$\frac{1}{2}\frac{d}{dt}|u|^2+m|\nabla u|^2\le\kappa|\Omega|-\alpha_2\|u\|^p_p+\langle h_\eta,u\rangle\quad a.e.\ t>\tau.$$
Denoting $\delta>0$ such that $\mu=2(m-\delta)\lambda_1,$ after the H\"older and Young inequalities, 
\begin{equation}\label{3}
\frac{1}{2}\frac{d}{dt}|u|^2+(m-\delta)|\nabla u|^2+\alpha_2\|u\|^p_p\le\kappa|\Omega|+\frac{1}{4\delta}\|h_\eta\|^2_*\quad a.e.\ t>\tau.
\end{equation}
In particular, using the Poincar\'e inequality
$$
\frac{d}{dt}|u|^2+\mu|u|^2+2\alpha_2\|u\|^p_p\le 2\kappa|\Omega|+\frac{1}{2\delta}\|h_\eta\|^2_*\quad a.e.\ t>\tau,
$$
whence the Gronwall lemma concludes the proof.
\end{proof}

Now let us introduce one possible choice of tempered universe, suitable after the above estimate. This will kill the initial data leading to the absorbing property under an appropriate assumption on each force $h_\eta.$
\begin{Definition}
Given $\sigma>0,$ denote by $\mathcal D_\sigma^{L^2}$ the class of all families of nonempty subsets $\widehat D=\{D(t)\}_{t\in\mathbb R}$ such that $\lim_{\tau\to-\infty}e^{\sigma\tau}\sup_{v\in D(\tau)}|v|^2=0.$
\end{Definition}
The class of fixed bounded sets, i.e. families $\widehat D=\{D(t)\}_{t\in\mathbb R}$ with $D(t)=B$ bounded in $L^2(\Omega)$ for any $t\in\mathbb R$ is denoted by $\mathcal D_F^{L^2}.$

Observe that $\mathcal D_F^{L^2}\subset\mathcal D_\sigma^{L^2}$ for any $\sigma>0$ and that $\mathcal D_\sigma^{L^2}$ is inclusion-closed.

The following assumption allows to ensure the (pullback) absorbing property for suitable tempered universes.
\begin{enumerate}
\item[(A2)] There exist $\eta_0\in (0,1]$ such that for any $\eta\in (0,\eta_0]$ there exists $\mu_\eta\in(0,2\lambda_1m)$ with
$$\int_{-\infty}^0e^{\mu_\eta s}\|h_\eta(s)\|^2_*ds<\infty.$$
\end{enumerate}

The next result is an immediate consequence of Proposition \ref{p10} and (A2). Nevertheless the explicit expression (\ref{1325}) will be important later.
\begin{Corollary}\label{c12}
Assume that (A1)--(A2) hold. Then the process $U_\eta$ for $\eta\in(0,\eta_0]$ possesses a pullback $\mathcal D_{\mu_\eta}^{L^2}$-absorbing family $\widehat D_{0,\eta}=\{\bar B_{L^2(\Omega )}(0,R_\eta(t))\}_{t\in\mathbb R}$ with
\begin{equation}\label{1325}
R^2_\eta(t)=1+\frac{2\kappa|\Omega|}{\mu_\eta}+\frac{e^{-\mu_\eta t}}{2(m-\mu_\eta(2\lambda_1)^{-1})}\int_{-\infty}^t e^{\mu_\eta s}\|h_\eta(s)\|^2_*ds.
\end{equation}
Moreover, $\widehat D_{0,\eta}\in\mathcal D_{\mu_\eta}^{L^2}.$
\end{Corollary}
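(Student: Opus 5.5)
We will prove Corollary \ref{c12} directly from the estimate in Proposition \ref{p10} with the choice $\mu=\mu_\eta$ given by (A2). The proof has two parts: first the absorbing property, then the temperedness of the absorbing family.

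\textbf{Absorbing property.} Fix $\eta\in(0,\eta_0]$, $t\in\mathbb R$ and $\widehat D\in\mathcal D_{\mu_\eta}^{L^2}$. Take $\tau\le t$, $u_\tau\in D(\tau)$ and $u\in\Phi_\eta(\tau,u_\tau)$. Proposition \ref{p10} gives
$$|u(t)|^2\le e^{-\mu_\eta t}e^{\mu_\eta\tau}\sup_{v\in D(\tau)}|v|^2+\frac{2\kappa|\Omega|}{\mu_\eta}+\frac{e^{-\mu_\eta t}}{2(m-\mu_\eta(2\lambda_1)^{-1})}\int_{-\infty}^te^{\mu_\eta s}\|h_\eta(s)\|^2_*ds.$$
Here we have enlarged the interval of integration from $[\tau,t]$ to $(-\infty,t]$, which is allowed because the integrand is nonnegative.

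The last integral is finite. By (A2) the integral over $(-\infty,0]$ is finite, and the integral over $[0,t]$ (when $t>0$) is finite because $h_\eta\in L^2_{loc}(\mathbb R;H^{-1}(\Omega))$.

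For the first term, the definition of $\mathcal D_{\mu_\eta}^{L^2}$ gives $e^{\mu_\eta\tau}\sup_{v\in D(\tau)}|v|^2\to0$ as $\tau\to-\infty$. So there is $\tau(\widehat D,t)\le t$ such that this first term is at most $1$ for all $\tau\le\tau(\widehat D,t)$. For such $\tau$ we get $|u(t)|^2\le R_\eta^2(t)$, with $R_\eta$ given by (\ref{1325}). Hence $U_\eta(t,\tau,D(\tau))\subset \bar B_{L^2(\Omega)}(0,R_\eta(t))$ for all $\tau\le\tau(\widehat D,t)$, which is the absorbing property.

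\textbf{Temperedness.} We must show that $e^{\mu_\eta t}R_\eta^2(t)\to0$ as $t\to-\infty$. Multiplying (\ref{1325}) by $e^{\mu_\eta t}$ gives
$$e^{\mu_\eta t}R_\eta^2(t)=e^{\mu_\eta t}\Big(1+\frac{2\kappa|\Omega|}{\mu_\eta}\Big)+\frac{1}{2(m-\mu_\eta(2\lambda_1)^{-1})}\int_{-\infty}^te^{\mu_\eta s}\|h_\eta(s)\|^2_*ds.$$
The first term tends to $0$ as $t\to-\infty$. The second term also tends to $0$: it is the tail $\int_{-\infty}^t$ of the integral $\int_{-\infty}^0 e^{\mu_\eta s}\|h_\eta(s)\|^2_*ds$, which converges by (A2). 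Indeed, by dominated (or monotone) convergence, $\int_{-\infty}^t e^{\mu_\eta s}\|h_\eta(s)\|^2_*ds\to0$ as $t\to-\infty$. Therefore $\widehat D_{0,\eta}\in\mathcal D_{\mu_\eta}^{L^2}$.

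No step here is a real obstacle. The only points needing care are that the constant $\kappa$ in Proposition \ref{p10} is the one appearing in (\ref{1325}), and that the denominator $m-\mu_\eta(2\lambda_1)^{-1}$ is positive; the latter holds because $\mu_\eta<2\lambda_1 m$.
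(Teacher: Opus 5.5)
Your proposal is correct and is the argument the paper intends when it calls this corollary an immediate consequence of Proposition \ref{p10} and (A2). You take $\mu=\mu_\eta$, enlarge the integral to $(-\infty,t]$, and absorb the initial-data term into the constant $1$ using the temperedness of $\widehat D$; you then get temperedness of the balls from the vanishing tail of the integral in (A2), which spells out the details the paper leaves implicit.
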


We end this summarizing section with the main result about existence of attractors.
\begin{Theorem}\label{t140}
Assume that (A1)--(A2) hold. Then each process $U_\eta$ for $\eta\in(0,\eta_0]$ possesses the minimal pullback $\mathcal D_{\mu_\eta}^{L^2}$-attractor $\mathcal A^\eta_{\mathcal D_{\mu_\eta}^{L^2}},$ which belongs to the universe $\mathcal D_{\mu_\eta}^{L^2}$ and is strictly $U_\eta$-invariant.

Moreover, the minimal pullback $\mathcal D_F^{L^2}$-attractors $\mathcal A^\eta_{\mathcal D_F^{L^2}}$ (for the corresponding processes $U_\eta$) also exist and the following relations hold
\begin{equation}\label{1420}
\mathcal A^\eta_{\mathcal D_F^{L^2}}(t)\subset\mathcal A^\eta_{\mathcal D_{\mu_\eta}^{L^2}}(t)\subset \bar B_{L^2(\Omega )}(0,R_\eta(t))\quad\forall t\in\mathbb R.
\end{equation}
\end{Theorem}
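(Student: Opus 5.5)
The plan is to apply Theorem \ref{ea} to each $U_\eta$ with $X=L^2(\Omega)$ and $\mathcal D=\mathcal D_{\mu_\eta}^{L^2}$. Proposition \ref{p5} already gives that $U_\eta$ is a strict, upper semicontinuous multi-valued process with closed values. Corollary \ref{c12} gives the pullback $\mathcal D_{\mu_\eta}^{L^2}$-absorbing family $\widehat D_{0,\eta}$, which has closed sections and belongs to the universe. So the only hypothesis left to check is pullback $\mathcal D_{\mu_\eta}^{L^2}$-asymptotic compactness. By the remark after the definition of universes, it suffices to prove it for the absorbing family $\widehat D_{0,\eta}$.

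For asymptotic compactness I will use a compactness-through-regularization argument on a fixed interval, followed by an energy argument.

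\emph{Step 1: estimates on $[t-2,t]$.} Take $\tau_n\to-\infty$, $x_n\in D_{0,\eta}(\tau_n)$ and $y_n=u_n(t)$ with $u_n\in\Phi_\eta(\tau_n,x_n)$. By absorption, $u_n(t-2)$ is bounded in $L^2(\Omega)$ for $n$ large. Integrating (\ref{3}) over $[t-2,t]$ then bounds $u_n$ in $L^2(t-2,t;H^1_0)\cap L^p(t-2,t;L^p)\cap L^\infty(t-2,t;L^2)$, and bounds $u_n'$ in $L^2(H^{-1})+L^q(L^q)$.

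\emph{Step 2: passing to the limit.} Aubin--Lions and a diagonal argument give a subsequence with $u_n\to u$ strongly in $L^2(t-2,t;L^2)$, $u_n(s)\to u(s)$ in $L^2$ for a.e.\ $s$, and weakly in $H^{-1}$ (hence weakly in $L^2$) for every $s\in[t-1,t]$. Since $l_\eta\in L^2(\Omega)$, we get $l_\eta(u_n(s))\to l_\eta(u(s))$ a.e., so $a_\eta(l_\eta(u_n))\to a_\eta(l_\eta(u))$ a.e. By the growth condition, $f_\eta(u_n)\rightharpoonup f_\eta(u)$ in $L^q$. Hence $u$ is a weak solution on $[t-1,t]$.

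\emph{Step 3: upgrading $y_n\rightharpoonup u(t)$ to strong convergence.} This is the main obstacle. The nonlocal coefficient rules out the usual energy-functional trick with a linear operator. My first attempt will be to introduce
$$J_n(s)=|u_n(s)|^2-2\int_{t-1}^s\big((f_\eta(u_n),u_n)+\langle h_\eta,u_n\rangle\big)dr,$$
and similarly $J(s)$ for $u$. By the energy equality these are non-increasing, because $a_\eta\ge m>0$. The integrals converge: the $h$-term by weak convergence, and the $f$-term because $f_\eta(u_n)u_n\le\kappa_2$ together with Fatou's lemma gives the correct one-sided inequality. Using the convergence $J_n(s)\to J(s)$ for a.e.\ $s$, monotonicity, and continuity of $J$, the standard argument gives $\limsup|u_n(t)|^2\le|u(t)|^2$. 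Combined with weak convergence, this yields strong convergence.

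If the $f$-term causes trouble, I will fall back on the alternative of showing $\limsup\int a_\eta(l_\eta(u_n))\|u_n\|^2\ge\int a_\eta(l_\eta(u))\|u\|^2$. This follows from the a.e.\ convergence of the coefficients, their lower bound $m$, and weak lower semicontinuity applied to $\sqrt{a_\eta(l_\eta(u_n))}\nabla u_n\rightharpoonup\sqrt{a_\eta(l_\eta(u))}\nabla u$. Inserting this into the energy equality bounds $\limsup|u_n(t)|^2$ in the same way.

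Theorem \ref{ea} then gives the minimal attractor $\mathcal A^\eta_{\mathcal D_{\mu_\eta}^{L^2}}$ inside $\overline{D_{0,\eta}(t)}=\bar B_{L^2(\Omega)}(0,R_\eta(t))$. Since the universe is inclusion-closed, Remark \ref{r9}(ii) places the attractor in $\mathcal D_{\mu_\eta}^{L^2}$, and strictness of $U_\eta$ then gives strict invariance. For $\mathcal D_F^{L^2}$, the family $\widehat D_{0,\eta}$ is also absorbing for fixed bounded sets because $\mathcal D_F^{L^2}\subset\mathcal D_{\mu_\eta}^{L^2}$, and asymptotic compactness is inherited, so Theorem \ref{ea} applies again. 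Finally, the inclusion in (\ref{1420}) follows from minimality: $\mathcal A^\eta_{\mathcal D_{\mu_\eta}^{L^2}}$ is a family of closed sets pullback attracting $\mathcal D_F^{L^2}$, so it contains the minimal $\mathcal D_F^{L^2}$-attractor.
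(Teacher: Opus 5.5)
Your proposal follows the paper's proof. The paper applies Theorem \ref{ea} with Proposition \ref{p5} and Corollary \ref{c12}, uses Remark \ref{r9}(ii) for membership in $\mathcal D_{\mu_\eta}^{L^2}$ and strict invariance, and uses $\mathcal D_F^{L^2}\subset\mathcal D_{\mu_\eta}^{L^2}$ plus minimality for (\ref{1420}). The one difference is that the paper cites \cite[Proposition 5]{ND16} for asymptotic compactness, whereas you sketch a proof of it. That sketch is fine except for one step in Step 3.

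\textbf{The problem in Step 3.} You keep the exact reaction term inside $J_n$. Fatou's lemma, using $f_\eta(u_n)u_n\le\kappa_2$ and a.e.\ convergence, then gives only
\[
\limsup_n\int_{t-1}^s(f_\eta(u_n),u_n)\,dr\le\int_{t-1}^s(f_\eta(u),u)\,dr ,
\]
that is, $\liminf_n J_n(s)\ge J(s)$. This is the wrong direction for the standard monotonicity argument, which needs $\limsup_n J_n(s_k)\le J(s_k)$ along a sequence $s_k\uparrow t$. The two-sided convergence $J_n(s)\to J(s)$ that you invoke is not justified. Your estimates bound $f_\eta(u_n)u_n$ only in $L^1$, and an $L^1$ bound plus a.e.\ convergence does not give convergence of the integrals.

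\textbf{First fix.} Do not split the terms. The inequality $J_n(t)\le J_n(s)$ is exactly
\[
|u_n(t)|^2\le|u_n(s)|^2+2\int_s^t\big((f_\eta(u_n),u_n)+\langle h_\eta,u_n\rangle\big)\,dr .
\]
Take $s$ in the full-measure set where $|u_n(s)|\to|u(s)|$. Fatou on $[s,t]$ now points the right way, so
\[
\limsup_n|u_n(t)|^2\le|u(s)|^2+2\int_s^t\big((f_\eta(u),u)+\langle h_\eta,u\rangle\big)\,dr .
\]
Letting $s\uparrow t$ and using $u\in C([t-2,t];L^2(\Omega))$ gives $\limsup_n|u_n(t)|^2\le|u(t)|^2$, as needed.

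\textbf{Second fix.} Alternatively, use the device of Lemma \ref{l16} and set
\[
J_n(s)=|u_n(s)|^2-2\kappa_2|\Omega|s-2\int_{t-1}^s\langle h_\eta,u_n\rangle\,dr .
\]
This is still non-increasing, because $(f_\eta(v),v)\le\kappa_2|\Omega|$ and $a_\eta\ge m>0$. Every term in it converges for a.e.\ $s$, so the standard argument applies verbatim.

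\textbf{Your fallback.} It does not remove this issue, because the $f$-integral is still present in the energy equality. Also, the inequality it needs is $\liminf_n\int a_\eta(l_\eta(u_n))\|u_n\|^2\ge\int a_\eta(l_\eta(u))\|u\|^2$, not the $\limsup$ version you wrote. Weak lower semicontinuity does give the $\liminf$ version.
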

\begin{proof}
After Proposition \ref{p5} and Corollary \ref{c12}, the (pullback) asymptotic compactness is the remaining property in order to apply Theorem \ref{ea}. But this is an almost verbatim copy of \cite[Proposition 5]{ND16}. Observe that the attractor belongs to the corresponding universe since the absorbing family does and has closed sections (cf. Remark \ref{r9} (ii)).

The second part of the statement is a byproduct of the inclusion $\mathcal D_F^{L^2}\subset D_{\mu_\eta}^{L^2}$, the same abstract results applied to the universe of fixed bounded sets $\mathcal D_F^{L^2}$ and the minimality property of each attractor. 
\end{proof}

\section{Robustness result in $L^2$}\label{s3}

We are interested in the behavior of attractors for the dynamical systems associated to problems $(P_\eta)$ when $\eta\to 0.$ Our approach supposes that the problems $(P_\eta)$ represent approximations to a final problem and data are approaching after collecting more and more information toward the definitive values of the involved external and reaction forces and viscosity terms. Namely:
\begin{enumerate}
\item[(A3)] Assume that there exist elements $a_0,$ $f_0,$ $h_0$ and $l_0$ such that $\{a_\eta\},$ $\{f_\eta\},$ $\{l_\eta\}$ and $\{h_\eta\}$ fulfill as $\eta\to0$ that
\begin{align*}
a_\eta&\to a_0 \quad \textrm{uniformly on compact intervals},\\
l_\eta&\rightharpoonup l_0 \quad \textrm{weakly in $L^2(\Omega )$},\\
f_\eta&\to f_0 \quad \textrm{uniformly on compact intervals},\\
h_\eta&\rightharpoonup h_0 \quad \textrm{weakly in $L^2(\tau, T;H^{-1}(\Omega ))$ for any $\tau<T$}.
\end{align*}
\end{enumerate}
Roughly speaking, problems $(P_\eta)$ are perturbations of the limit problem 
$$(P_0)\left\{\begin{array}{ll}
\displaystyle\frac{\partial u}{\partial t}-a_0(l_0(u))\Delta u=f_0(u)+h_0(t)&\textrm{in $\Omega\times (\tau,\infty)$},\\
u=0&\textrm{on $\partial\Omega\times (\tau,\infty)$},\\
u(x,\tau)=u_\tau(x)&\textrm{in $\Omega$}.
\end{array}\right.$$
It is immediate that assumptions (A1) and (A3) imply that the elements in $(P_0),$ $a_0,$ $f_0,$ $h_0,$ satisfy the analogous condition (A1) with the same constants. This means that existence of solutions is guaranteed, $\Phi_0$ (analogous definition) is well-defined and a process $U_0$ can be associated. We do not know yet whether it possesses an attractor since (A2) is not inherited by the limit.

In two steps we will see that this limit is not only formal but rigorous for solutions in finite-time intervals and for attractors as well.

\begin{Theorem}\label{t14}
Assume that (A1) and (A3) hold. Let $\tau\in\mathbb R$ be given. Suppose that $u^{\eta_n}_\tau\rightharpoonup u_\tau$ weakly in $L^2(\Omega )$ as $\eta_n\to0$ and consider a sequence of weak solutions $u^{\eta_n}\in\Phi_{\eta_n}(\tau,u^{\eta_n}_\tau)$. Then there exist a subsequence (relabeled the same) and $u^0\in\Phi_0(\tau,u_\tau)$ such that for any $T>\tau$ the sequence $\{u^{\eta_n}\}$ converge to $u^0$ in several senses, namely weakly-star in $L^\infty(\tau,T;L^2(\Omega)),$ weakly in $L^2(\tau,T;H^1_0(\Omega ))\cap L^p(\tau,T;L^p(\Omega ))$, strongly in $L^2(\tau,T;L^2(\Omega )),$ with $\{f_{\eta_n}(u^{\eta_n})\}$ converging to $f_0(u^0)$ weakly in $L^q(\tau,T;L^q(\Omega )),$ $\{-a_{\eta_n}(l_{\eta_n}(u^{\eta_n}))\Delta u^{\eta_n}\}$ converging to $-a_0(l_0(u^0))\Delta u^0$ weakly in $L^2(\tau,T;H^{-1}(\Omega ))$ and $\{(u^{\eta_n})'\}$ converging to $(u^0)'$ weakly in $L^2(\tau,T;H^{-1}(\Omega ))+L^q(\tau,T;L^q(\Omega )).$
\end{Theorem}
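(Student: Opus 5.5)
The plan is the standard compactness argument: uniform estimates, extraction of weakly convergent subsequences, a compactness lemma for strong convergence, and identification of the limits in the nonlinear terms.

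\textbf{Uniform bounds.} Since $u^{\eta_n}_\tau\rightharpoonup u_\tau$, the sequence $|u^{\eta_n}_\tau|$ is bounded. The forces satisfy $h_{\eta_n}\rightharpoonup h_0$ weakly in $L^2(\tau,T;H^{-1}(\Omega))$, so $\int_\tau^T\|h_{\eta_n}\|_*^2$ is bounded for each $T$. Integrating (\ref{3}), which uses only (A1) with constants independent of $\eta$, gives bounds uniform in $n$ for $u^{\eta_n}$ in $L^\infty(\tau,T;L^2(\Omega))\cap L^2(\tau,T;H^1_0(\Omega))\cap L^p(\tau,T;L^p(\Omega))$.

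\textbf{Bounds on the nonlinear terms and the derivative.}
\begin{itemize}
\item By the growth condition, $f_{\eta_n}(u^{\eta_n})$ is bounded in $L^q(\tau,T;L^q(\Omega))$.
\item Since $l_{\eta_n}$ is weakly convergent, it is bounded in $L^2(\Omega)$, so $l_{\eta_n}(u^{\eta_n}(t))$ is bounded uniformly in $t\in[\tau,T]$ and $n$. Uniform convergence of $a_{\eta_n}$ on compacts together with continuity of $a_0$ then bounds $a_{\eta_n}(l_{\eta_n}(u^{\eta_n}))$ in $L^\infty$. Hence $a_{\eta_n}(l_{\eta_n}(u^{\eta_n}))\Delta u^{\eta_n}$ is bounded in $L^2(\tau,T;H^{-1}(\Omega))$.
\item From the equation, $(u^{\eta_n})'$ is bounded in $L^2(\tau,T;H^{-1}(\Omega))+L^q(\tau,T;L^q(\Omega))$.
\end{itemize}

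\textbf{Extraction.} A diagonal argument over $T=\tau+k$ gives a subsequence with all the stated weak and weak-star convergences. The Aubin--Lions lemma, using $H^1_0\subset\subset L^2\subset H^{-1}+L^q$, gives strong convergence in $L^2(\tau,T;L^2(\Omega))$, and after passing to a further subsequence, a.e. convergence in $\Omega\times(\tau,T)$ and $u^{\eta_n}(t)\to u^0(t)$ in $L^2(\Omega)$ for a.e. $t$.

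\textbf{Identifying $f_0(u^0)$.} Write $f_{\eta_n}(u^{\eta_n})-f_0(u^0)=[f_{\eta_n}(u^{\eta_n})-f_0(u^{\eta_n})]+[f_0(u^{\eta_n})-f_0(u^0)]$. At a point where $u^{\eta_n}\to u^0$, the values $u^{\eta_n}$ stay in a compact interval, so both brackets tend to zero by local uniform convergence of $f_{\eta_n}$ and continuity of $f_0$. This gives a.e. convergence, and with the $L^q$ bound, Lions' lemma yields weak convergence to $f_0(u^0)$ in $L^q$.

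\textbf{Identifying the viscous term (main obstacle).} This step couples weak convergence of $l_{\eta_n}$ with the nonlocal argument. For a.e. $t$ we have
$$l_{\eta_n}(u^{\eta_n}(t))-l_0(u^0(t))=(l_{\eta_n},u^{\eta_n}(t)-u^0(t))+(l_{\eta_n}-l_0,u^0(t))\to0.$$
The first term vanishes by boundedness of $l_{\eta_n}$ and strong convergence of $u^{\eta_n}(t)$; the second vanishes by weak convergence of $l_{\eta_n}$. The arguments lie in a fixed compact interval, so $a_{\eta_n}(l_{\eta_n}(u^{\eta_n}))\to a_0(l_0(u^0))$ a.e. in $t$ with a uniform bound, and dominated convergence gives strong convergence in every $L^r(\tau,T)$, $r<\infty$. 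Take a test function $\phi\in L^2(\tau,T;H^1_0)$. Then $a_{\eta_n}(\cdot)\nabla\phi\to a_0(\cdot)\nabla\phi$ strongly in $L^2$, again by dominated convergence, while $\nabla u^{\eta_n}\rightharpoonup\nabla u^0$ weakly. The product of a strongly and a weakly convergent sequence therefore passes to the limit, giving the stated weak convergence of $-a_{\eta_n}(l_{\eta_n}(u^{\eta_n}))\Delta u^{\eta_n}$.

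\textbf{Passing to the limit and the initial datum.} Now pass to the limit in the weak formulation tested against $v\,\psi(t)$ with $\psi\in C_c^\infty(\tau,T)$. This shows $u^0$ satisfies $(P_0)$, and that $(u^{\eta_n})'\rightharpoonup(u^0)'$ in the stated space. To check $u^0(\tau)=u_\tau$, test with $\psi\in C^1([\tau,T])$, $\psi(T)=0$, $\psi(\tau)=1$. Integrating by parts at level $n$ and in the limit, and using $u^{\eta_n}_\tau\rightharpoonup u_\tau$, gives $(u^0(\tau),v)=(u_\tau,v)$ for all $v$.
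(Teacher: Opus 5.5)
Your proposal is correct and follows essentially the same route as the paper: uniform bounds from (\ref{3}), Aubin--Lions for strong convergence in $L^2(\tau,T;L^2(\Omega))$ and a.e.\ convergence, Lions' lemma for $f_{\eta_n}(u^{\eta_n})$, a.e.\ convergence of the nonlocal coefficient via the weak convergence of $l_{\eta_n}$, and recovery of the initial datum by testing with $v\psi$, $\psi(T)=0$. You make explicit the uniform $L^\infty$ bound on $a_{\eta_n}(l_{\eta_n}(u^{\eta_n}))$, which the paper leaves implicit; it is used both for the bound on $(u^{\eta_n})'$ and for passing to the limit in the product with $\nabla u^{\eta_n}$, where strong $L^2(\tau,T)$ convergence of the coefficient alone would not suffice, and you also correctly place the limit of $f_{\eta_n}(u^{\eta_n})$ in $L^q$ rather than $L^p$.
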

\begin{proof}
We follow the same lines as in \cite[Theorem 4]{ND16}, by using uniform estimates, the Gronwall lemma, the Aubin-Lions Theorem and a diagonal argument to increase the final time $T.$ For short let us just give the main ideas.

The main estimates follow from (\ref{3}) in Proposition \ref{p10} after the Gronwall lemma. The uniform estimates and compactness arguments imply the claimed convergences in $L^\infty(\tau,T;L^2(\Omega))$ and $L^2(\tau,T;H^1_0(\Omega ))\cap L^p(\tau,T;L^p(\Omega ))$ of $u^{\eta_n}$ to a certain element $u^0.$ 

Observe that the boundedness of $\{u^{\eta_n}\}$ in $L^p(\tau,T;L^p(\Omega ))$ and (A1) gives that $\{f_{\eta_n}(u^{\eta_n}))\}$ is bounded in $L^q(\tau,T;L^q(\Omega )).$ From all above we deduce that 
$$\{(u^{\eta_n})'\}\textrm{ is bounded in $L^2(\tau,T;H^{-1}(\Omega ))+L^q(\tau,T;L^q(\Omega )).$}$$ 
Now the Aubin-Lions Theorem provides the convergence of (a subsequence of, but relabeled the same) $\{u^{\eta_n}\}$ to $u^0$ strongly in $L^2(\tau,T;L^2(\Omega ))$ and by the Dominated Convergence Theorem almost everywhere in $(\tau,T)\times\Omega.$

The former almost everywhere convergence jointly with the uniform convergence on compact subsets of $\mathbb R$ of $f_{\eta_n}$ towards $f_0$ means that $f_{\eta_n}(u^{\eta_n})$ also converge almost everywhere to $f_0(u^0).$ Indeed, fix $x\in\Omega$ such that $u^{\eta_n}(x)\to u^0(x).$ Consider a compact neighborhood of $u^0(x),$ $K=\bar B(u^0(x))\subset\mathcal N_{u^0(x)}.$ Then
$$|f_{\eta_n}(u^{\eta_n} (x))-f_0(u^0(x))|\le |f_{\eta_n}(u^{\eta_n} (x))-f_0(u^{\eta_n}(x))|+|f_0(u^{\eta_n}(x))-f_0(u^0(x))|\to 0\quad \textrm{as $\eta_n\to0$}.$$
This means (cf. \cite[Lemme 1.3, p.12]{Lions}) that $\{f_{\eta_n}(u^{\eta_n})\}\rightharpoonup f_0(u^0)$ weakly in $L^p(\tau, T;L^p(\Omega )).$

Analogously from the almost everywhere convergence $u^{\eta_n}(t)\to u^0(t)$ in $L^2(\Omega)$ and the weak convergence of $l_{\eta_n}\rightharpoonup l_0$ weak in $L^2(\Omega)$ we obtain 
$$|l_{\eta_n}(u^{\eta_n})-l_0(u^0)|=|(l_{\eta_n},u^{\eta_n})-(l_0,u^0)|\le |(l_{\eta_n},u^{\eta_n}-u^0)|+|(l_{\eta_n}-l_0,u^0)|\to 0\quad a.e.\ t,$$
which implies (as the argument with the sequence $\{f_{\eta_n}\}$) that $\{a_{\eta_n}(l_{\eta_n}(u^{\eta_n}))\}$ converge almost everywhere to $a_0(l_0(u^0)).$ Using the Dominated Convergence Theorem again it is not difficult to conclude that in fact
$$a_{\eta_n}(l_{\eta_n}(u^{\eta_n}))\to a_0(l_0(u^0))\quad\textrm{strongly in $L^2(\tau,T).$}$$
Since $\Delta u^{\eta_n}\rightharpoonup\Delta u^0$ weakly in $L^2(\tau,T;H^{-1}(\Omega )),$ from above we also deduce that
$$-a_{\eta_n}(l_{\eta_n}(u^{\eta_n}))\Delta u^{\eta_n}\rightharpoonup -a_0(l_0(u^0))\Delta u^0\quad\textrm{weakly in $L^2(\tau,T;H^{-1}(\Omega )).$}$$
Finally, we deduce that $u^0$ solves the limit problem $(P_0),$ since the equation is satisfied and the initial condition can be deduce in a standard way testing against $v\varphi$ with $v\in H^1_0(\Omega )\cap L^p(\Omega)$ and $\varphi\in H^1(\tau,T)$ with $\varphi(T)=0$ and $\varphi(\tau)\neq0.$ Integrating and comparing the resulting expressions of the problems $(P_\eta)$ and $(P_0),$ the weak convergence assumption $u^{\eta_n}_\tau\rightharpoonup u_0$ concludes that $u^0(\tau)=u_0$ and the proof is finished.
\end{proof}

Unfortunately the convergence almost everywhere in time of $u^{\eta_n}(t)\to u^0(t)$ strongly in $L^2(\Omega )$ does not seem enough for our purposes. It still remains to gain one convergence for our arguments. We impose a slightly stronger assumption than (A3) for the sequence $\{h_\eta\}$ converging to $h_0.$
\begin{enumerate}
\item[(A4)] One of next two options holds: either $h_\eta\to h_0$ strongly in $L^2(\tau,T;H^{-1}(\Omega )$ for any $\tau<T,$ or $h_\eta\rightharpoonup h_0$ weakly in $L^2(\tau,T;L^2(\Omega )$ for any $\tau<T.$
\end{enumerate}

\begin{Lemma}\label{l16}
Under the assumptions of Theorem \ref{t14} and (A4), the converging sequence $\{u^{\eta_n}\}$ obtained in Theorem \ref{t14} also satisfies $u^{\eta_n}(t)\to u^0(t)$ strongly in $L^2(\Omega )$ for all $t>\tau.$
\end{Lemma}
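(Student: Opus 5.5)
We prove the strong convergence $u^{\eta_n}(t)\to u^0(t)$ in $L^2(\Omega)$ for every $t>\tau$ with the standard energy method. First we show weak convergence $u^{\eta_n}(t)\rightharpoonup u^0(t)$ in $L^2(\Omega)$ for \emph{every} $t\ge\tau$. Then we show $\limsup_n|u^{\eta_n}(t)|\le|u^0(t)|$ by comparing the energy equalities of Remark \ref{r1} for $u^{\eta_n}$ and $u^0$.

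\textbf{Step 1 (weak convergence for all $t$).} By Theorem \ref{t14}, $\{(u^{\eta_n})'\}$ is bounded in $L^2(\tau,T;H^{-1}(\Omega))+L^q(\tau,T;L^q(\Omega))$. Hence $\{u^{\eta_n}\}$ is equicontinuous with values in $H^{-1}(\Omega)+L^q(\Omega)$. It is also bounded in $C([\tau,T];L^2(\Omega))$ by the estimate of Proposition \ref{p10}. An Ascoli-type argument, combined with the identification of the limit via Theorem \ref{t14}, gives $u^{\eta_n}(t_n)\rightharpoonup u^0(t)$ weakly in $L^2(\Omega)$ whenever $t_n\to t$. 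Alternatively, one can test the equations against $v\varphi$ exactly as in the identification of the initial datum at the end of Theorem \ref{t14}. In particular $|u^0(t)|\le\liminf_n|u^{\eta_n}(t)|$.

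\textbf{Step 2 (energy comparison).} Define the auxiliary functionals
$$J_n(t)=|u^{\eta_n}(t)|^2-2\kappa_2|\Omega|t-2\int_\tau^t\langle h_{\eta_n},u^{\eta_n}\rangle dr,\qquad J_0(t)=|u^0(t)|^2-2\kappa_2|\Omega|t-2\int_\tau^t\langle h_0,u^0\rangle dr.$$
By the energy equality and $f(s)s\le\kappa_2$, both are nonincreasing, since the viscous term $a\|u\|^2\ge0$. Each is also continuous in $t$.

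The forcing integrals converge: $\int_\tau^t\langle h_{\eta_n},u^{\eta_n}\rangle dr\to\int_\tau^t\langle h_0,u^0\rangle dr$ for every $t$. This is exactly where (A4) enters.
\begin{itemize}
\item Under the first option, $h_{\eta_n}\to h_0$ strongly in $L^2H^{-1}$ and $u^{\eta_n}\rightharpoonup u^0$ weakly in $L^2H^1_0$, and strong--weak pairs converge.
\item Under the second option, $h_{\eta_n}\rightharpoonup h_0$ weakly in $L^2L^2$ and $u^{\eta_n}\to u^0$ strongly in $L^2L^2$ by Theorem \ref{t14}.
\end{itemize}
Mere weak--weak convergence under (A3) would not suffice, which is why (A4) is needed. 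Since $|u^{\eta_n}(s)|\to|u^0(s)|$ for a.e.\ $s$ (from the strong $L^2(\tau,T;L^2)$ convergence, along a subsequence), we get $J_n(s)\to J_0(s)$ for a.e.\ $s\in(\tau,T)$.

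\textbf{Step 3 (conclusion).} Fix $t>\tau$ and choose $s_k\uparrow t$ with $s_k$ in the a.e.\ convergence set. Monotonicity gives $J_n(t)\le J_n(s_k)$, so $\limsup_n J_n(t)\le J_0(s_k)$. Letting $k\to\infty$ and using the continuity of $J_0$ yields $\limsup_n J_n(t)\le J_0(t)$. Together with the convergence of the forcing integrals this gives $\limsup_n|u^{\eta_n}(t)|\le|u^0(t)|$. Combined with Step 1, weak convergence plus convergence of norms gives strong convergence in the Hilbert space $L^2(\Omega)$. Because the limit $u^0(t)$ is identified independently of the subsequence, the whole sequence converges.

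The main obstacle is Step 2. Specifically, we must build a monotone energy functional despite the nonlocal viscosity and the nonlinearity. Using only the sign $f_\eta(s)s\le\kappa_2$ and dropping the nonnegative viscous term avoids having to pass to the limit in $a_{\eta_n}(l_{\eta_n}(u^{\eta_n}))\|u^{\eta_n}\|^2$ or $(f_{\eta_n}(u^{\eta_n}),u^{\eta_n})$, which would require strong convergence we do not have. The remaining delicate point, the forcing term, is handled by (A4).
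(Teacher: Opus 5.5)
Your proposal is correct and follows essentially the paper's proof. It uses the same non-increasing functionals $J_n,J_0$ built from the energy equality, their a.e.\ convergence thanks to (A4), and Ascoli--Arzel\`a in $C([\tau,T];H^{-1}(\Omega)+L^q(\Omega))$ to get $u^{\eta_n}(t)\rightharpoonup u^0(t)$ for every $t$, then concludes from weak convergence plus norm control. The only difference is that you write out the one-sided argument that the paper delegates to the proof of Proposition 1 in its earlier reference when it asserts $J_n(s)\to J_0(s)$ for all $s$: monotonicity from the left plus continuity of $J_0$ gives $\limsup_n|u^{\eta_n}(t)|\le|u^0(t)|$, and the reverse inequality comes from weak lower semicontinuity.
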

\begin{proof}
Fix $T>\tau$. We continue with the compactness arguments used in Theorem \ref{t14}. Now from the energy equality (cf. Remark \ref{r1}) we have that 
$$|u^{\eta_n}(s)|^2\le 2\kappa|\Omega|(s-r)+|u^{\eta_n}(r)|^2+2\int_r^s\langle h_{\eta_n}(\theta),u^{\eta_n}(\theta)\rangle d\theta\quad\forall\tau\le r\le s\le T$$
and similarly for the solution $u^0$ to $(P_0)$
$$|u^0(s)|^2\le 2\kappa|\Omega|(s-r)+|u^0(r)|^2+2\int_r^s\langle h_0(\theta),u^0(\theta)\rangle d\theta\quad\forall\tau\le r\le s\le T.$$
Consider the functions
$$J_{n}(s):=|u^{\eta_n}(s)|^2-2\kappa|\Omega|s-2\int_\tau^s\langle h_{\eta_n}(\theta),u^{\eta_n}(\theta)\rangle d\theta,$$
$$J_0(s):=|u^0(s)|^2-2\kappa|\Omega|s-2\int_\tau^s\langle h_0(\theta),u^0(\theta)\rangle d\theta.$$
They are non-increasing, continuous and thanks to the convergences proved in Theorem \ref{t14} and (A4) it holds that $J_n(s)\to J_0(s)$ almost everywhere on $(\tau,T).$ Under these conditions we can assure in fact that the convergence $J_n(s)\to J_0(s)$ holds for all $s\in(\tau,T]$ (for a detailed explanation of this argument see for instance the proof of \cite[Proposition 1]{ND16}). Moreover, since (A4) provides the convergence of the integral terms, we deduce that
\begin{equation}\label{1108}
\lim_{\eta\to0}|u^{\eta_n}(s)|^2=|u^0(s)|^2\quad\forall s\in(\tau,T].
\end{equation}
Since we already had that $\{u^{\eta_n}\}$ is bounded in $C([\tau,T];L^2(\Omega)),$ then $\{u^{\eta_n}(s)\}$ converges weakly in $L^2(\Omega)$ to some element. Actually we may identify this weak limit since $\{(u^{\eta_n})'\}$ is also bounded in $L^q(\tau, T;H^{-1}(\Omega )+L^q(\Omega))$ and the compact embedding $L^2(\Omega )\subset\subset H^{-1}(\Omega )$ implies that the Ascoli-Arzel\`a Theorem can be used. Namely $\{u^{\eta_n}\}$ converges to $u^0$ strongly in $C([\tau,T];H^{-1}(\Omega )+L^q(\Omega)).$ Thus we identify the weak limit
$$u^{\eta_n}(s)\rightharpoonup u^0(s)\quad\textrm{weakly in $L^2(\Omega)\quad\forall s\in[\tau,T]$}.$$
This weak limit and the convergence of the norms (\ref{1108}) give the result in $(\tau,T].$ But $T$ is arbitrary, so the same argument in $T+1,$ $T+2$ and successively and a diagonal procedure finishes the proof.
\end{proof}

Before establishing our main result we need to embed somehow the pullback absorbing families in a suitable family which should be also controlled by the dynamical system $U_0.$ This is an \emph{ad hoc} assumption to provide the most general conditions on the family of problems $(P_\eta)$ and relate them suitably to $(P_0).$

\begin{enumerate}
\item[(A5)] There exists $\mu_0\in (0,2m\lambda_1)$ such that
\begin{equation}\label{1251}
\int_{-\infty}^0e^{\mu_0s}\|h_0(s)\|_*^2ds<\infty
\end{equation}
and
\begin{equation}\label{1156}
\lim_{t\to-\infty}\limsup_{\eta\to0}\frac{e^{(\mu_0-\mu_\eta)t}}{m-\mu_\eta(2\lambda_1)^{-1}}\int_{-\infty}^te^{\mu_\eta s}\|h_\eta(s)\|^2_*ds=0.
\end{equation}
\end{enumerate}
\begin{Remark}\label{r17}
For any $c\in[0,\infty),$ the family of balls $\{B_{L^2(\Omega)}(0,\Psi_c(t))\},$ where
$$\Psi_c^2(t):=c+\limsup_{\eta\to0}\frac{e^{-\mu_\eta t}}{2(m-\mu_\eta(2\lambda_1)^{-1})}\int_{-\infty}^te^{\mu_\eta s}\|h_\eta(s)\|^2_*ds,$$
belongs to the universe $\mathcal D_{\mu_0}^{L^2}.$
\end{Remark}

Without assuming totally (A5), but just (\ref{1251}) we have the following
\begin{Corollary}
Assume that (A1), (A3) and (\ref{1251}) hold. Then $U_0$ possesses the minimal pullback $D^{L^2}_F$-attractor $\mathcal A^0_{\mathcal D^{L^2}_F}$ and the minimal pullback $D^{L^2}_{\mu_0}$-attractor $\mathcal A^0_{\mathcal D^{L^2}_{\mu_0}}$ and the following relation holds
$$\mathcal A^0_{\mathcal D_F^{L^2}}(t)\subset\mathcal A^0_{\mathcal D_{\mu_0}^{L^2}}(t)\subset\bar B_{L^2(\Omega )}(0,R_0(t))\quad\forall t\in\mathbb R,$$
where
$$R_0^2(t):=1+\frac{2\kappa|\Omega|}{\mu_0}+\frac{e^{-\mu_0 t}}{2(m-\mu_0(2\lambda_1)^{-1})}\int_{-\infty}^t e^{\mu_0s}\|h_0(s)\|^2_*ds.$$
\end{Corollary}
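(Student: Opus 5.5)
The plan is to repeat, for the limiting process $U_0$, the argument of Section \ref{s2} that produced Theorem \ref{t140}, with (\ref{1251}) playing the role that (A2) played for $\eta\in(0,\eta_0]$.

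\textbf{Step 1: the hypotheses of Section \ref{s2} hold for $\eta=0$.} As noted after (A3), the data $a_0$, $f_0$, $h_0$, $l_0$ satisfy (A1) with the same constants. Hence all the results of Section \ref{s2} that use only (A1) apply verbatim with $\eta=0$. In particular, Proposition \ref{p5} gives that $U_0$ is a strict, upper semicontinuous multi-valued process with closed values. Proposition \ref{p10} gives the energy estimate for every $\mu\in(0,2m\lambda_1)$.

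\textbf{Step 2: the absorbing family.} Take $\mu=\mu_0$ in Proposition \ref{p10}. Fix $\widehat D\in\mathcal D^{L^2}_{\mu_0}$, $t\in\mathbb R$ and $u_\tau\in D(\tau)$. The initial-data term is
$$e^{-\mu_0(t-\tau)}|u_\tau|^2\le e^{-\mu_0 t}\,e^{\mu_0\tau}\sup_{v\in D(\tau)}|v|^2,$$
and this tends to $0$ as $\tau\to-\infty$, so it is smaller than $1$ for $\tau$ small enough. The forcing integral over $(\tau,t)$ is bounded by the one over $(-\infty,t)$. That integral is finite by (\ref{1251}), since $\int_0^t e^{\mu_0 s}\|h_0(s)\|_*^2ds<\infty$ because $h_0\in L^2_{loc}(\mathbb R;H^{-1}(\Omega))$. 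Therefore $\widehat D_{0,0}=\{\bar B_{L^2(\Omega)}(0,R_0(t))\}_{t\in\mathbb R}$ is pullback $\mathcal D^{L^2}_{\mu_0}$-absorbing, exactly as in Corollary \ref{c12}. It also belongs to $\mathcal D^{L^2}_{\mu_0}$, because
$$e^{\mu_0 t}R_0^2(t)=e^{\mu_0 t}\Big(1+\tfrac{2\kappa|\Omega|}{\mu_0}\Big)+\tfrac{1}{2(m-\mu_0(2\lambda_1)^{-1})}\int_{-\infty}^te^{\mu_0s}\|h_0(s)\|_*^2ds\to0 \quad\text{as } t\to-\infty,$$
the last term being the tail of a convergent integral.

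\textbf{Step 3: asymptotic compactness, the only nontrivial step.} I would reproduce the argument of \cite[Proposition 5]{ND16}, as invoked in the proof of Theorem \ref{t140}. That argument uses only the structural conditions (A1) and the existence of the absorbing family, so it carries over to $\eta=0$. Take $\tau_n\to-\infty$ and $y_n\in U_0(t,\tau_n,x_n)$. By absorption, for each fixed $k$ and all large $n$, the solutions lie in $\bar B(0,R_0(t-k))$ at time $t-k$. The Aubin--Lions compactness of Theorem \ref{t14} (used with fixed data) and a diagonal argument give a limit solution defined on $(-\infty,t]$. Weak convergence plus the energy-function argument of Lemma \ref{l16} then gives $|y_n|\to|y|$, hence strong convergence.

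The delicate point in that energy argument is the weakly convergent term $a_0(l_0(u))\|u\|^2$. It is handled by the monotone-functional trick of Lemma \ref{l16}, together with the fact that the forcing tail contributes a quantity that is small for large $k$.

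\textbf{Step 4: conclusion.} Theorem \ref{ea} now gives the minimal pullback $\mathcal D^{L^2}_{\mu_0}$-attractor with $\mathcal A^0_{\mathcal D^{L^2}_{\mu_0}}(t)\subset\bar B(0,R_0(t))$, since the absorbing balls are closed. The fixed-bounded universe satisfies $\mathcal D^{L^2}_F\subset\mathcal D^{L^2}_{\mu_0}$, so the same family is absorbing for it and the same reasoning gives $\mathcal A^0_{\mathcal D^{L^2}_F}$. Minimality yields the first inclusion, because $\mathcal A^0_{\mathcal D^{L^2}_{\mu_0}}$ has closed sections and pullback attracts every family in $\mathcal D^{L^2}_F$.
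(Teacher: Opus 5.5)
Your proposal is correct and follows the paper's route. The limit data satisfy (A1), so $U_0$ is an upper semicontinuous process with closed values; the balls of radius $R_0(t)$ are pullback $\mathcal D^{L^2}_{\mu_0}$-absorbing and belong to $\mathcal D^{L^2}_{\mu_0}$; asymptotic compactness comes from \cite[Proposition 5]{ND16}; and Theorem \ref{ea} together with minimality gives both attractors and the inclusions.

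In Step 3 the appeal to a forcing tail that is small for large $k$ is unnecessary. Theorem \ref{t14} and Lemma \ref{l16} with constant data already show that $U_0(t,t-1,\cdot)$ maps bounded sets into relatively compact sets, and this, combined with absorption at time $t-1$, gives asymptotic compactness directly.
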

\begin{proof}
It is a consequence of Theorem \ref{ea}, analogously to the result for the perturbed problems (cf. Theorem \ref{t140}), and where the family $\{B_{L^2(\Omega)}(0,R_0(t))\}$ is pullback $\mathcal D^{L^2}_{\mu_0}$-absorbing for $U_0.$ Therefore, the last relation can be deduced in the same way as (\ref{1420}).
\end{proof}

Beyond the above attraction result, we may understand better the constructed \emph{ad hoc} condition (\ref{1156}) in (A5). Roughly speaking we do not impose a uniform bound for all the radii $R_\eta$ in terms of $R_0,$ but a bound in terms of the superior limit (i.e. when $\eta_n\to 0$ only a finite amount of them may escape, but they do not matter). The radii of absorbing families for $(P_\eta)$ problems are controlled by a tempered function $\Psi^2_c$ in $\mathcal D_{\mu_0}^{L^2}.$ In other words, these time-sections of the (perturbed) attractors are subsets of the time-section of a family which is attracted through $U_0$ to the limiting attractor $\mathcal A^0_{\mathcal D^{L^2}_{\mu_0}}.$

We can establish now our main robustness result.
\begin{Theorem}\label{t19}
Assume that (A1)--(A5) hold and that 
\begin{equation}\label{1742}
\liminf_{\mu\to0}\mu_\eta=:\underline\mu>0.
\end{equation}
Then the families $\{\mathcal A^\eta_{\mathcal D^{L^2}_{\mu_\eta}}\}$ converge upper semicontinuously to $\mathcal A^0_{\mathcal D^{L^2}_{\mu_0}}$ as $\eta\to0,$ i.e. 
$$\lim_{\eta\to0}\textrm{dist}_{L^2(\Omega )}(\mathcal A^\eta_{\mathcal D^{L^2}_{\mu_\eta}}(t),\mathcal A^0_{\mathcal D^{L^2}_{\mu_0}}(t))=0\quad\forall t\in\mathbb R.$$
\end{Theorem}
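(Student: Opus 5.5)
We argue by contradiction, in the usual way for upper semicontinuity of attractors. Suppose that for some $t\in\mathbb R$ the claim fails. Then there exist $\varepsilon>0$, a sequence $\eta_n\to0$ and points $y_n\in\mathcal A^{\eta_n}_{\mathcal D^{L^2}_{\mu_{\eta_n}}}(t)$ with
\[
\textrm{dist}_{L^2(\Omega)}\bigl(y_n,\mathcal A^0_{\mathcal D^{L^2}_{\mu_0}}(t)\bigr)\ge\varepsilon .
\]
Two things must be shown: that $\{y_n\}$ has a strongly convergent subsequence in $L^2(\Omega)$, and that every such limit lies in $\mathcal A^0_{\mathcal D^{L^2}_{\mu_0}}(t)$. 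Together these give the contradiction.

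\emph{Step 1: backward trajectories.} Fix $k\in\mathbb N$. Each attractor $\mathcal A^{\eta_n}_{\mathcal D^{L^2}_{\mu_{\eta_n}}}$ is strictly invariant (Theorem \ref{t140}). Hence there are $x_n^k\in\mathcal A^{\eta_n}_{\mathcal D^{L^2}_{\mu_{\eta_n}}}(t-k)$ and solutions $u^{n,k}\in\Phi_{\eta_n}(t-k,x_n^k)$ with $u^{n,k}(t)=y_n$. By (\ref{1420}) we have $|x_n^k|\le R_{\eta_n}(t-k)$.

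We next use (\ref{1325}), (\ref{1742}) and (\ref{1156}). From (\ref{1742}), $1/\mu_{\eta_n}\le 2/\underline\mu$ for $n$ large. The integral term is controlled by $\Psi_c$ of Remark \ref{r17}. So for $n\ge n(k)$,
\[
R^2_{\eta_n}(t-k)\le \Psi^2_c(t-k)+1, \qquad c=1+4\kappa|\Omega|/\underline\mu .
\]
Since $\{\Psi_c(s)\}\in\mathcal D^{L^2}_{\mu_0}$, the points $x_n^k$ stay bounded for fixed $k$. The family $\widehat B=\{\bar B(0,\Psi^2_c(s)+1)^{1/2}\}_s$ also belongs to $\mathcal D^{L^2}_{\mu_0}$. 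Any use of $\limsup$ here needs care: I would pass to subsequences so that, for each fixed $k$, the bound holds for all large $n$, and then use a diagonal argument in $k$.

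\emph{Step 2: compactness and passage to the limit.} For $k=1$, apply Theorem \ref{t14} on $[t-1,t]$ after extracting $x_n^1\rightharpoonup x^1$ weakly. Lemma \ref{l16} then gives $y_n=u^{n,1}(t)\to u^{0}(t)$ strongly in $L^2(\Omega)$, along a subsequence, with $u^0\in\Phi_0(t-1,x^1)$. This settles precompactness; call the limit $y$.

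For general $k$, by a diagonal procedure we get $y=\lim y_n\in U_0(t,t-k,x^k)$, where $x^k$ is a weak limit of $x_n^k$. Weak lower semicontinuity of the norm gives $|x^k|^2\le\Psi^2_c(t-k)+1$. Define $D(s)=\bar B_{L^2(\Omega)}\bigl(0,(\Psi^2_c(s)+1)^{1/2}\bigr)$. Then $y\in U_0(t,t-k,D(t-k))$ for every $k$, and $\widehat D\in\mathcal D^{L^2}_{\mu_0}$ by Remark \ref{r17}, using (A5) for the $\mu_0$-temperedness.

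\emph{Step 3: conclusion.} The minimal attractor pullback attracts $\widehat D$, so $\textrm{dist}(U_0(t,t-k,D(t-k)),\mathcal A^0_{\mathcal D^{L^2}_{\mu_0}}(t))\to0$ as $k\to\infty$. Hence $y\in\mathcal A^0_{\mathcal D^{L^2}_{\mu_0}}(t)$, since that set is closed. This contradicts $\textrm{dist}(y_n,\mathcal A^0(t))\ge\varepsilon$.

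The main obstacle is Step 1, namely the uniform control of $R_{\eta_n}(t-k)$ by a single $\mu_0$-tempered family. This is exactly where (\ref{1156}) and (\ref{1742}) enter. The delicate point is that (\ref{1156}) is a $\limsup$ in $\eta$, so the bound holds only for $n\ge n(k)$, which is why the diagonal argument is necessary. One also has to check that the factor $e^{-\mu_\eta s}$ in $R_\eta$ is compatible with the weight $e^{\mu_0 s}$ defining $\mathcal D^{L^2}_{\mu_0}$.
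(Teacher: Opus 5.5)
Your argument is correct and is essentially the paper's proof. It proceeds by contradiction and uses negative invariance to pull the bad points $y_n$ back to an earlier time. The envelope $\Psi_c$ (the paper's $\widehat D_{0,0}$ with $c_0=2+2\kappa|\Omega|/\underline\mu$) bounds $R_{\eta_n}$ for all but finitely many $n$ via (\ref{1742}) and (A5), and Theorem \ref{t14} together with Lemma \ref{l16} passes to the limit.

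The only difference is that the paper fixes a single $\bar\tau$ with $\textrm{dist}_{L^2(\Omega)}(U_0(t,\bar\tau,D_{0,0}(\bar\tau)),\mathcal A^0_{\mathcal D^{L^2}_{\mu_0}}(t))<\varepsilon/2$ and closes with the triangle inequality, which avoids the times $t-k$ and your diagonal extraction. One small point: (\ref{1156}) only guarantees $\Psi_c<\infty$ at sufficiently negative times, so your compactness step should start from a large $k$ rather than from $k=1$.
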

\begin{proof}
By contradiction, suppose that the thesis is false. Then there exist $\varepsilon>0,$ $t\in\mathbb R$ and a sequence $\eta_n\to0$ with 
\begin{equation}\label{1409}
\textrm{dist}_{L^2(\Omega )}(\mathcal A^{\eta_n}_{\mathcal D^{L^2}_{\mu_{\eta_n}}}(t),\mathcal A^0_{\mathcal D^{L^2}_{\mu_0}}(t))>\varepsilon.
\end{equation}

Now consider the family $\widehat{D}_{0,0}=\{D_{0,0}(t)\}_{t\in\mathbb R}$ where 
\begin{equation}\label{1735}
D_{0,0}(t):=B_{L^2(\Omega )}(0,\Psi_{c_0}(t))\quad\textrm{with $c_0=2+\frac{2\kappa|\Omega|}{\underline{\mu}}$}.
\end{equation}
The choice of the constant $c_0$ makes $\widehat{D}_{0,0}$ a kind of envelope of almost all the pullback $\mathcal D^{L^2}_{\mu_{\eta_n}}$-absorbing families by (\ref{1325}). Since this family belongs to $\mathcal D^{L^2}_{\mu_0}$ (cf. Remark \ref{r17}), there exists $\bar\tau:=\tau(t,\widehat{D}_{0,0},\varepsilon)<t$ such that
$$\textrm{dist}_{L^2(\Omega)}(U_0(t,\bar\tau,D_{0,0}(\bar\tau)),\mathcal A^0_{\mathcal D^{L^2}_{\mu_0}}(t))<\varepsilon /2.$$

By (\ref{1409}) we may select points $z^{\eta_n}\in\mathcal A^{\eta_n}_{\mathcal D^{L^2}_{\mu_{\eta_n}}}(t)$ such that  
\begin{equation}\label{2012}
\textrm{dist}_{L^2(\Omega )}(z^{\eta_n},\mathcal A^0_{\mathcal D^{L^2}_{\mu_0}}(t))>\varepsilon.
\end{equation}
By the negative invariance of the minimal pullback attractors
$$\mathcal A^{\eta_n}_{\mathcal D^{L^2}_{\mu_{\eta_n}}}(t)\subset U_{\eta_n}(t,\bar\tau,\mathcal A^{\eta_n}_{\mathcal D^{L^2}_{\mu_{\eta_n}}}(\bar\tau)).$$
Therefore each $z^{\eta_n}$ belongs to the trajectory of a weak solution, namely there exist $\{z^{\eta_n}_{\bar\tau}\}$ with $z^{\eta_n}_{\bar\tau}\in\mathcal A^{\eta_n}_{\mathcal D^{L^2}_{\mu_{\eta_n}}}(\bar\tau)$ and $u^{\eta_n}\in\Phi_{\eta_n}(\bar\tau,z^{\eta_n}_{\bar\tau})$ with $z^{\eta_n}=u^{\eta_n}(t)\in\mathcal A^{\eta_n}_{\mathcal D^{L^2}_{\mu_{\eta_n}}}(t).$ Moreover, since the pullback attractors are contained in the time-section of the absorbing families, we have
$$z^{\eta_n}_{\bar\tau}\in B_{L^2(\Omega )}(0,R_{\eta_n}(\bar\tau))\quad\forall{\eta_n}.$$
Since $\eta_n\to0,$ by the properties of superior and inferior limits, all but at most a finite amount of $\eta_n$ satisfy
$$R^2_{\eta_n}(\bar\tau)\le\Psi^2_{c_0}(\bar\tau).$$
Thus $\{z^{\eta_n}_{\bar\tau}=u^{\eta_n}(\bar\tau)\}$ is bounded and there exist a weakly converging subsequence $u^{\eta_n}(\bar\tau)\rightharpoonup u_{\bar\tau}\in D_{0,0}(\bar\tau)$ and $u^0\in\Phi_0(\tau,u_{\bar\tau})$ such that, by using Theorem \ref{t14} and Lemma \ref{l16}, we may take small enough $\bar\eta(\bar\tau,t,\varepsilon)$ with
$$|u^{\eta_n}(t)-u^0(t)|<\varepsilon/2\quad\forall\eta_n\le\bar\eta.$$

Finally the triangle inequality yields a contradiction with (\ref{2012}) since
$$\textrm{dist}_{L^2(\Omega)}(u^{\eta_n}(t),\mathcal A^0_{\mathcal D^{L^2}_{\mu_0}}(t))\le \textrm{dist}_{L^2(\Omega)}(u^{\eta_n}(t),u^0(t))+\textrm{dist}_{L^2(\Omega)}(u^0(t),\mathcal A^0_{\mathcal D^{L^2}_{\mu_0}}(t))<\varepsilon\quad\forall \eta_n\le\bar\eta.$$
\end{proof}

\begin{Remark}
Although the result is stated in the usual terms of attractors, the proof of the robustness result really shows that the family of attractors is upper semicontinuously converging toward $\Lambda^0 (\widehat{D}_{0,0},t),$ where $\widehat{D}_{0,0}$ is introduced in (\ref{1735}) and the upper script 0 in the omega-limit means w.r.t. to the dynamical system $U_0$ (cf. Remark \ref{r9} (i)).
\end{Remark}

It might be convenient to complete our main result, Theorem \ref{t19}, with some explanations, comments and conditions that imply that the required assumptions hold, in particular the \emph{ad hoc} condition (A5).

\begin{Remark}
Some robustness results in the literature conclude with an autonomous limiting problem $(P_0)$ when $\eta\to0,$ i.e. $h_0\equiv 0.$ Actually that is the case treated in \cite{ND16} since there $h_\eta=\eta h$ for a fixed element $h\in L^2_{loc}(\mathbb R;H^{-1}(\Omega ))$ fulfilling condition (23) in \cite{ND16}. Of course, these assumptions imply that (A4), (A5) and (\ref{1742}) hold immediately in the context of this paper and that one may take any $\mu_0\in (0,2m\lambda_1],$ where the right extreme value $2m\lambda_1$ in the interval is also valid since the problem $(P_0)$ is autonomous ($h_0$ does not exist; see the proof of Proposition \ref{p10}).
\end{Remark}

\begin{Remark}
(i) If $h_0$ fulfills (\ref{1251}) and $\mu_\eta=\mu_0$ for all $\eta<\!<1,$ then (\ref{1742}) is trivial and (\ref{1156}) reduces to
\begin{equation}\label{1836}
\lim_{t\to-\infty}\limsup_{\eta\to0}\int_{-\infty}^te^{\mu_0s}\|h_\eta(s)\|^2_*ds=0,
\end{equation}
which looks easier to check. For instance, if $h_\eta=\hat f(\eta)h$ for some $h$ (and $\mu_0\in(0,2m\lambda_1)$) fulfilling (\ref{1251}) and some function $\hat f$ with $\limsup_{\eta\to0}|\hat f(\eta)|<\infty,$ then (\ref{1836}) holds, and it yields (A5).

(ii) If some other argument allowed simplifying the expression (\ref{1156}) neglecting what appears in front of the integral and reducing it to compute
$$\lim_{t\to-\infty}\limsup_{\eta\to0}\int_{-\infty}^te^{\mu_\eta s}\|h_\eta(s)\|^2_*ds,$$
one should observe that, in general, the limit in time $t$ and the superior limit in $\eta$ do not conmute as one may check immediately with a real counterexample of functions with finite integral on $(-\infty,0)$ and compact support of fixed length moving to the left as $\eta\to0.$
\end{Remark}

We have discussed till now the case where $h_0\equiv 0$ and the case of $\mu_\eta$ fixed for all $\eta.$ Consider now the case when not only $h_\eta$ is converging to $h_0$ but also the associated $\{\mu_\eta\}$ (not necessarily constants in $\eta$) converges to some value: $\mu_\eta\to\mu_0.$ The next result gives sufficient conditions such that (\ref{1251}) holds.
\begin{Proposition}\label{p23}
Assume that (A2) holds, $h_\eta\rightharpoonup h_0$ weakly in $L^2(-M, 0;H^{-1}(\Omega ))$ for any $M>0,$ $\mu_\eta\to\mu_0$ and
\begin{equation}\label{1056}
\sup_{\eta\in(0,\eta_0]}\int_{-\infty}^0e^{\mu_\eta s}\|h_\eta(s)\|_*^2ds<\infty,
\end{equation}
where $\eta_0$ is given in assumption (A2). Then
$$\int_{-\infty}^0e^{\mu_0 s}\|h_0\|^2_*ds<\infty.$$
\end{Proposition}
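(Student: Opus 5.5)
The plan is a weak lower semicontinuity argument on finite windows, followed by a monotone limit $M\to\infty$. Set $C:=\sup_{\eta\in(0,\eta_0]}\int_{-\infty}^0e^{\mu_\eta s}\|h_\eta(s)\|_*^2ds<\infty$, which is finite by (\ref{1056}). It suffices to show that for every $M>0$
$$\int_{-M}^0e^{\mu_0 s}\|h_0(s)\|_*^2ds\le C.$$
The monotone convergence theorem then gives the claim as $M\to\infty$.

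Fix $M>0$. On the bounded interval $[-M,0]$ I will consider the weighted functions $g_\eta(s):=e^{\mu_\eta s/2}h_\eta(s)$ and $g_0(s):=e^{\mu_0 s/2}h_0(s)$. The first step is to show that $g_\eta\rightharpoonup g_0$ weakly in $L^2(-M,0;H^{-1}(\Omega))$. For a test function $\varphi\in L^2(-M,0;H^1_0(\Omega))$ I split
$$\int_{-M}^0\langle g_\eta-g_0,\varphi\rangle ds=\int_{-M}^0\langle h_\eta-h_0,e^{\mu_0 s/2}\varphi\rangle ds+\int_{-M}^0(e^{\mu_\eta s/2}-e^{\mu_0 s/2})\langle h_\eta,\varphi\rangle ds.$$
The first term tends to zero by the assumed weak convergence, since $e^{\mu_0 s/2}\varphi$ is again an admissible test function. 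For the second term, the weakly convergent sequence $\{h_\eta\}$ is bounded in $L^2(-M,0;H^{-1}(\Omega))$ by the uniform boundedness principle; alternatively this follows directly from (\ref{1056}) together with $e^{\mu_\eta s}\ge e^{-\mu_\eta M}$ and the boundedness of $\mu_\eta$. On the other hand, $\sup_{s\in[-M,0]}|e^{\mu_\eta s/2}-e^{\mu_0 s/2}|\le \frac{M}{2}|\mu_\eta-\mu_0|\to0$ by the mean value theorem. Hence the second term is bounded by this supremum times $\|h_\eta\|_{L^2(-M,0;H^{-1})}\|\varphi\|_{L^2(-M,0;H^1_0)}$, and so it tends to zero.

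The second step is weak lower semicontinuity of the norm:
$$\int_{-M}^0e^{\mu_0 s}\|h_0\|_*^2ds=\|g_0\|^2_{L^2(-M,0;H^{-1})}\le\liminf_{\eta\to0}\|g_\eta\|^2_{L^2(-M,0;H^{-1})}\le\liminf_{\eta\to0}\int_{-\infty}^0e^{\mu_\eta s}\|h_\eta\|_*^2ds\le C.$$
This is exactly the bound needed, and letting $M\to\infty$ concludes the proof.

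I expect the only delicate point to be the weak convergence of the weighted functions in the first step, since both the weight and the force vary with $\eta$. The uniform convergence of the exponential weights on compact intervals, combined with the uniform $L^2$ bound on $h_\eta$, handles this. The remaining care is to work on the finite window $[-M,0]$ before passing to $M\to\infty$, because weak convergence is only assumed on bounded intervals.
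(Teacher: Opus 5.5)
Your proof is correct and follows the same route as the paper: weak convergence of the exponentially weighted forces on each window $[-M,0]$, then weak lower semicontinuity of the norm to get $\int_{-M}^0e^{\mu_0 s}\|h_0(s)\|_*^2ds\le C$, then $M\to\infty$. You also justify the weighted weak convergence, which the paper only asserts, and your weight $e^{\mu_\eta s/2}$ is the right one for the squared norm, whereas the paper writes $e^{\mu_\eta\cdot}$.
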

\begin{proof}
Denote by $C>0$ to the supremum in (\ref{1056}). Since $\lim_\eta\mu_\eta=\mu_0,$ we have that
$$e^{\mu_\eta \cdot}h_\eta (\cdot)\rightharpoonup e^{\mu_0\cdot}h_0(\cdot)\quad\textrm{weakly in $L^2(-M,0;H^{-1}(\Omega ))$ for all $M>0.$}$$
By the properties of the weak limit one deduces that
$$\int_{-M}^0e^{\mu_0s}\|h_0(s)\|^2_*ds\le C\quad\forall M>0,$$
whence the result follows.
\end{proof}

Our aim now is to provide some sufficient conditions that also imply (\ref{1156}), which combined with the above gives (A5). Observe that the assumptions now are stronger that in the previous result.
\begin{Proposition}\label{p24}
Under the assumptions of Proposition \ref{p23}, if $\{\mu_\eta\}\subset (0,2m\lambda_1)$ and $\mu_0=\lim_\eta\mu_\eta\in(0,2m\lambda_1)$ and
$$h_\eta\to h_0\quad\textrm{strongly in $L^2(-M,0;H^{-1}(\Omega ))$ for any $M>0$},$$
and 
$$\limsup_{\eta\to0}\int_{-\infty}^0e^{\mu_\eta s}\|h_\eta(s)\|^2_*ds=\int_{-\infty}^0e^{\mu_0s}\|h_0(s)\|^2_*ds.$$
Then (A5) holds.
\end{Proposition}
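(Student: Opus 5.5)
The plan is to verify the two parts of (A5) separately. Condition (\ref{1251}) comes directly from Proposition \ref{p23}. Strong convergence of $h_\eta$ in $L^2(-M,0;H^{-1}(\Omega))$ implies the weak convergence assumed there, and (\ref{1056}) follows from the hypothesis on the $\limsup$: that $\limsup$ is finite because its right-hand side is finite by Proposition \ref{p23}'s conclusion, or more directly because for $\eta$ small the integrals are bounded by that value plus $1$, and for the remaining $\eta\le\eta_0$ we use (A2) as in Proposition \ref{p23}. So $\mu_0\in(0,2m\lambda_1)$ satisfies (\ref{1251}), and the remaining work is (\ref{1156}).

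For (\ref{1156}), I would first dispose of the two prefactors at a fixed $t\le 0$. Since $\mu_\eta\to\mu_0<2m\lambda_1$, we have $m-\mu_\eta(2\lambda_1)^{-1}\to m-\mu_0(2\lambda_1)^{-1}>0$, so this factor is bounded away from zero for small $\eta$. Also $e^{(\mu_0-\mu_\eta)t}\to1$ for each fixed $t$. Both factors therefore converge to positive finite limits. The tail integrals $I_\eta(t):=\int_{-\infty}^te^{\mu_\eta s}\|h_\eta(s)\|^2_*ds$ are nonnegative and (by the above) bounded for small $\eta$. Hence
$$\limsup_{\eta\to0}\frac{e^{(\mu_0-\mu_\eta)t}}{m-\mu_\eta(2\lambda_1)^{-1}}I_\eta(t)=\frac{1}{m-\mu_0(2\lambda_1)^{-1}}\limsup_{\eta\to0}I_\eta(t).$$

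The key step is to identify $\limsup_\eta I_\eta(t)$. I would split $I_\eta(t)=\int_{-\infty}^0e^{\mu_\eta s}\|h_\eta\|_*^2ds-\int_t^0e^{\mu_\eta s}\|h_\eta\|_*^2ds$. On the compact interval $[t,0]$ we have $e^{\mu_\eta s}\to e^{\mu_0 s}$ uniformly. Strong convergence of $h_\eta$ in $L^2(t,0;H^{-1}(\Omega))$ gives $\|h_\eta\|_*^2\to\|h_0\|_*^2$ in $L^1(t,0)$, which follows from $|\|a\|^2-\|b\|^2|\le\|a-b\|(\|a\|+\|b\|)$ and Cauchy–Schwarz. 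Together these give
$$\int_t^0e^{\mu_\eta s}\|h_\eta\|_*^2ds\to\int_t^0e^{\mu_0 s}\|h_0\|_*^2ds.$$
Since this second term converges, the $\limsup$ of the difference is the $\limsup$ of the first term minus this limit. Using the hypothesis on the $\limsup$ of the whole integral, we get
$$\limsup_{\eta\to0}I_\eta(t)=\int_{-\infty}^0e^{\mu_0s}\|h_0\|^2_*ds-\int_t^0e^{\mu_0s}\|h_0\|^2_*ds=\int_{-\infty}^te^{\mu_0s}\|h_0\|^2_*ds.$$

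Finally, letting $t\to-\infty$, the right-hand side tends to $0$ because $\int_{-\infty}^0e^{\mu_0s}\|h_0\|^2_*ds<\infty$ by (\ref{1251}), which yields (\ref{1156}). The only delicate points are bookkeeping. One is the handling of $\limsup$ of a product and of a difference, where one factor or term genuinely converges. The other is the $L^1$ convergence of $\|h_\eta\|_*^2$ on bounded intervals; this is precisely where strong rather than weak convergence of $h_\eta$ is needed, since weak convergence would only give a $\liminf$ inequality in the wrong direction.
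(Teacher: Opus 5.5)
Your proof is correct and follows the paper's own argument. You reduce (\ref{1156}) to the tail condition (\ref{1152}) by disposing of the prefactors, then combine the convergence of $\int_t^0e^{\mu_\eta s}\|h_\eta(s)\|_*^2ds$ with the $\limsup$ hypothesis to get $\limsup_{\eta\to0}\int_{-\infty}^te^{\mu_\eta s}\|h_\eta(s)\|_*^2ds=\int_{-\infty}^te^{\mu_0 s}\|h_0(s)\|_*^2ds\to0$. Note that (\ref{1056}) is already among the assumptions of Proposition \ref{p23}, so there is nothing to re-derive, and your attempted derivation via that proposition's conclusion is circular.

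Your closing remark is mistaken, however. Since $I_\eta(t)$ is the integral over $(-\infty,0)$ minus the integral over $(t,0)$, bounding $\limsup_\eta I_\eta(t)$ from above only needs a lower bound for $\liminf_{\eta\to0}\int_t^0e^{\mu_\eta s}\|h_\eta(s)\|_*^2ds$. Weak lower semicontinuity applied to $e^{\mu_\eta\cdot/2}h_\eta\rightharpoonup e^{\mu_0\cdot/2}h_0$ (as in the proof of Proposition \ref{p23}) already gives $\ge\int_t^0e^{\mu_0 s}\|h_0(s)\|_*^2ds$, which is exactly the needed direction, so strong convergence of $h_\eta$ is not actually used.
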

\begin{proof}
Since (\ref{1251}) follows by Proposition \ref{p23}, we are concerned with (\ref{1156}). In fact, by the assumption that $\mu_0$ is neither 0 nor $2m\lambda_1,$ the convergence $\mu_\eta\to\mu_0$ and the properties of superior limits, it suffices to check that
\begin{equation}\label{1152}
\lim_{t\to-\infty}\limsup_{\eta\to0}\int_{-\infty}^te^{\mu_\eta s}\|h_\eta(s)\|^2_*ds=0.
\end{equation}
Consider an arbitrary sufficiently small value $\varepsilon>0$ and denote
$M^0_\varepsilon$ such that
$$\int_{-\infty}^{-M^0_\varepsilon}e^{\mu_0s}\|h_0(s)\|^2_*ds=\varepsilon.$$
From the hypotheses
$$\lim_{\eta\to0}\int_{-M}^0e^{\mu_\eta s}\|h_\eta(s)\|^2_*ds=\int_{-M}^0e^{\mu_0 s}\|h_0(s)\|^2_*ds\quad\forall M>0.$$
In particular, therefore
$$\limsup_{\eta\to0}\int_{-\infty}^{-M}e^{\mu_\eta s}\|h_\eta(s)\|^2_*ds\le\varepsilon\quad\forall M\ge M^0_\varepsilon.$$ 
This yields (\ref{1152}), therefore (A5) and finishes the proof.
\end{proof}

There might be sequences $\{\mu_{\eta_n}\}\subset(0,2m\lambda_1)$ related to $h_{\eta_n}$ such that (A2) holds but the sequence $\{\mu_{\eta_n}\}$ do not converge. In this case, we may consider the inferior limit (already introduced) and the superior limit of the tempered parameters
$$\underline{\mu}:=\liminf_{\eta\to0}\mu_\eta,\qquad\bar\mu:=\limsup_{\eta\to0}\mu_\eta.$$
\begin{Corollary}\label{c25}
Assume that $h_\eta\rightharpoonup h_0$ weakly in $L^2(-M,0;H^{-1}(\Omega ))$ for any $M>0$ and that $\{\mu_\eta\}\subset(0,2m\lambda_1)$ are such that (\ref{1056}) holds. Then
$$\int_{-\infty}^0e^{\underline{\mu}s}\|h_0(s)\|^2_*ds<\infty.$$
\end{Corollary}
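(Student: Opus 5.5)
We argue exactly as in the proof of Proposition \ref{p23}, after passing to a subsequence along which the tempered parameters converge to $\underline{\mu}$. By the definition of the inferior limit there is a sequence $\eta_n\to0$ with $\mu_{\eta_n}\to\underline{\mu}$. Since $\{\mu_\eta\}\subset(0,2m\lambda_1)$, we have $\underline{\mu}\in[0,2m\lambda_1]$.

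Next we fix $M>0$ and work on $[-M,0]$. The factors $e^{\mu_{\eta_n}s}$ converge uniformly on $[-M,0]$ to $e^{\underline{\mu}s}$, and they are bounded there. Combined with $h_{\eta_n}\rightharpoonup h_0$ weakly in $L^2(-M,0;H^{-1}(\Omega))$, this gives
\[
e^{\mu_{\eta_n}\cdot/2}h_{\eta_n}\rightharpoonup e^{\underline{\mu}\cdot/2}h_0 \quad\textrm{weakly in } L^2(-M,0;H^{-1}(\Omega)).
\]
To see this, split the difference into a term $(e^{\mu_{\eta_n}\cdot/2}-e^{\underline{\mu}\cdot/2})h_{\eta_n}$ and a term $e^{\underline{\mu}\cdot/2}(h_{\eta_n}-h_0)$. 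The first term tends to zero strongly, because $\{h_{\eta_n}\}$ is bounded in $L^2(-M,0;H^{-1}(\Omega))$ by weak convergence and the prefactor tends to zero uniformly. The second term tends to zero weakly, because multiplication by a fixed bounded function is a bounded linear operator on $L^2(-M,0;H^{-1}(\Omega))$.

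We use the weights $e^{\mu s/2}$ inside the weak limit so that the squared norm is exactly the weighted integral. Weak lower semicontinuity of the norm then yields
\[
\int_{-M}^0 e^{\underline{\mu}s}\|h_0(s)\|_*^2ds\le\liminf_{n}\int_{-M}^0 e^{\mu_{\eta_n}s}\|h_{\eta_n}(s)\|_*^2ds\le C,
\]
where $C$ denotes the supremum in (\ref{1056}). Since $C$ does not depend on $M$, letting $M\to\infty$ and applying monotone convergence gives the claim.

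The only delicate point is the weighted weak-convergence step, namely making sure the exponential weights combine with the weak limit. The splitting above handles it. No other obstacle is expected, and the case $\underline{\mu}=0$ causes no trouble, since the argument never uses $\underline{\mu}>0$.
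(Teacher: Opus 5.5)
Your proof is correct and follows the paper's route: extract a sequence $\eta_n\to0$ with $\mu_{\eta_n}\to\underline{\mu}$, then rerun the weak-limit and lower-semicontinuity argument of Proposition \ref{p23} on each $[-M,0]$ and let $M\to\infty$. Your choice of the weights $e^{\mu s/2}$ is the right normalization, because the squared $L^2(-M,0;H^{-1}(\Omega))$ norm then equals the weighted integral with exponent $\underline{\mu}$; the paper's sketch of Proposition \ref{p23} writes $e^{\mu_\eta\cdot}$, which would literally give the weight $e^{2\mu_0 s}$.
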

\begin{proof}
It suffices to consider a sequence $\mu_{\eta_n}$ converging to $\underline{\mu}$ and apply the Proposition \ref{p23}.
\end{proof}
\begin{Remark}
Obviously the same result holds for $\bar\mu$ instead of $\underline{\mu},$ but observe that $\underline{\mu}s\ge\bar\mu s$ for $s\le 0,$ so the result above is stronger than the analogous with $\bar\mu.$
\end{Remark}
The reformulation of Proposition \ref{p24} in terms of inferior and superior limits require to control $\mu_\eta$ from above and from below, to take advantages of the the properties of the superior limit in (\ref{1156}). The new resulting condition is not as straightforward to check as before.
\begin{Proposition}
Suppose that (A2) is satisfied, $h_\eta\rightharpoonup h_0$ in $L^2(-M,0;H^{-1}(\Omega ))$ for any $M>0,$ 
$$0<\underline{\mu}=\liminf_{\eta\to0}\mu_\eta\le\limsup_{\eta\to0}\mu_\eta=\bar\mu<2m\lambda_1$$
and (\ref{1056}) and (\ref{1152}) hold. Then, for any $\epsilon>0$ such that $\mu_0:=\bar\mu+\epsilon<2m\lambda_1,$ (A5) is true.
\end{Proposition}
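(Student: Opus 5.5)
We need two things: (\ref{1251}) for $\mu_0=\bar\mu+\epsilon$, and (\ref{1156}).

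\emph{Step 1: (\ref{1251}).} By Corollary \ref{c25} (its hypotheses are (\ref{1056}) and the weak convergence), $\int_{-\infty}^0e^{\underline\mu s}\|h_0(s)\|^2_*ds<\infty$. Since $\mu_0=\bar\mu+\epsilon>\underline\mu$, we have $e^{\mu_0 s}\le e^{\underline\mu s}$ for $s\le0$, and (\ref{1251}) follows. Also $\mu_0\in(0,2m\lambda_1)$ by the choice of $\epsilon$.

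\emph{Step 2: control of the prefactor.} Since $\limsup_{\eta\to0}\mu_\eta=\bar\mu<2m\lambda_1$, there is $\eta_1$ such that $\mu_\eta\le\bar\mu+\epsilon/2$ for $\eta\le\eta_1$. Hence
$$m-\mu_\eta(2\lambda_1)^{-1}\ge m-(\bar\mu+\epsilon/2)(2\lambda_1)^{-1}\ge \frac{\epsilon}{4\lambda_1}>0,$$
because $\bar\mu+\epsilon<2m\lambda_1$. So the denominator in (\ref{1156}) is bounded below uniformly in $\eta\le\eta_1$. For the exponential factor with $t\le0$, we have $\mu_0-\mu_\eta\ge\epsilon/2>0$, so $e^{(\mu_0-\mu_\eta)t}\le e^{\epsilon t/2}\le1$.

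\emph{Step 3: conclusion.} For $t\le0$ and $\eta\le\eta_1$,
$$\frac{e^{(\mu_0-\mu_\eta)t}}{m-\mu_\eta(2\lambda_1)^{-1}}\int_{-\infty}^te^{\mu_\eta s}\|h_\eta(s)\|^2_*ds\le\frac{4\lambda_1}{\epsilon}\,e^{\epsilon t/2}\int_{-\infty}^t e^{\mu_\eta s}\|h_\eta(s)\|^2_*ds.$$
Taking $\limsup_{\eta\to0}$ and then $t\to-\infty$, the right-hand side tends to $0$ by (\ref{1152}). Alternatively, one can bound the integral by the constant $C$ from (\ref{1056}) and use $e^{\epsilon t/2}\to0$, so (\ref{1152}) is not even strictly needed.

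The only delicate point is Step 2: we must keep $\mu_\eta$ uniformly away from $2m\lambda_1$ and strictly below $\mu_0$ for all small $\eta$. This is exactly where $\bar\mu<2m\lambda_1$ and the margin $\epsilon$ enter, through the definition of $\limsup$.
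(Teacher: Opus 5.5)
Your proof is correct and follows the paper's route: Corollary \ref{c25} plus monotonicity of $e^{\mu s}$ in $\mu$ for $s\le0$ gives (\ref{1251}), and the definition of $\limsup$ gives $\mu_\eta<\mu_0$ for small $\eta$, hence a uniform bound on the prefactor, after which (\ref{1152}) finishes (the paper bounds the prefactor more crudely, by $e^{(\mu_0-\mu_\eta)t}\le1$ and $m-\mu_\eta(2\lambda_1)^{-1}>m-\mu_0(2\lambda_1)^{-1}$). Your closing remark is also correct and goes beyond the paper: keeping the margin $\epsilon/2$ produces the decaying factor $e^{\epsilon t/2}$, so (\ref{1056}) alone already yields (\ref{1156}) and the hypothesis (\ref{1152}) is in fact redundant.
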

\begin{proof}
By Corollary \ref{c25} it is immediate that any value $\mu_0$ as in the statement makes that (\ref{1251}) holds true. In order to obtain (\ref{1156}), $\mu_0=\bar\mu+\epsilon,$ by the properties of the superior limit, is such that when $\eta\to0,$ all but at most a finite number of $\eta$ fulfill $\bar\mu+\epsilon>\mu_\eta.$ Therefore $\mu_0-\mu_\eta>0$ and so $e^{(\mu_0-\mu_\eta)t}\le 1.$ This, combined with the assumption $\bar\mu<2m\lambda_1$ and the choice of $\mu_0,$ also less than $2m\lambda_1,$ implies
$$\frac{e^{(\mu_0-\mu_\eta)t}}{m-\mu_\eta(2\lambda_1)^{-1}}\le\frac{1}{m-\mu_0(2\lambda_1)^{-1}}.$$
Therefore (\ref{1156}) is a consequence of (\ref{1152}). 
\end{proof}

\section*{Conclusions}

A robustness result has been stablished in $L^2(\Omega)$ for pullback attractors in suitable universes for perturbed problems $(P_\eta)$ towards the minimal pullback attractor of the corresponding limit problem $(P_0).$ This pullback attractor acts in another suitable universe with tempered parameter $\mu_0.$ The perturbed elements $\{a_\eta\},$ $\{f_\eta\},$ $\{l_\eta\}$ and $\{h_\eta\}$ (these last ones associated with coefficients $\{\mu_\eta\}$) satisfy the appropriate conditions (A1)--(A5) such that the result holds, being the ad hoc condition (A5) the most difficult to verify. Final comments to derive sufficient conditions such that (A5) hold are given. Namely, the tempered parameters $\{\mu_\eta\}$ are analyzed in several situations, fixed or not, converging or just with inferior and superior limits. The results extend previous ones where the limit problem was autonomous, that is, $h_0\equiv 0.$

\subsection*{Acknowledgments}

RC is a fellow of the FPU program of the Spanish Ministry of Education, Culture and Sport, reference FPU15/03080. This work has been partially supported by the projects PGC2018-096540-B-I00 (MICINN/FEDER, EU), PID2019-108654GB-I00 (MICINN), US-1254251 and P18-FR-4509 (Junta de Andaluc\'{\i}a/FEDER/US).

PMR thanks Professor Rodiak Figueroa-L\'opez for pointing out the interest of the robustness problem combined to non-autonomous limiting dynamics, held in the stimulating environment of the Summer Meeting on Differential Equations 2019 Chapter at ICMC-USP, S\~ao Carlos, SP, Brazil.

The authors would like to congratulate Professor Peter E. Kloeden on this occasion for his seventieth birthday. For PMR and JV it has been sincerely a great pleasure for so many years of joint collaboration, inspiration from him to us and friendship. We wish him many more years of fruitful relations both about mathematics and friendship.

\end{document}